\def\dist{\mathop{\rm dist}\nolimits}
\def\supp{\mathop{\rm supp}\nolimits}
\newcommand{\R}{\mathbb{R}}
\newcommand{\Rd}{ \mathbb{R}^{d}}
\newcommand{\N}{\mathbb{N}}
\newcommand{\Z}{\mathbb{Z}}
\newcommand{\indyk}[1]{\mathds{1}_{#1}}
\newcommand{\sfera}{ \mathds{S}}
\newcommand{\Borel}{ {\mathcal{B}}(\Rd) }
\newcommand{\Fourier}{ {\mathcal{F}}}
\newcommand{\nubounded}[1]{\bar{\nu}_{#1}}
\newcommand{\scalp}[2]{#1\cdot#2}
 \def\dist{\mathop{\rm
    dist}\nolimits} \def\diam{\mathop{\rm diam}\nolimits}
\newtheorem{lemat}{\indent\sc Lemma}
\newtheorem{prop}{\indent\sc Proposition}
\newtheorem{twierdzenie}{\indent\sc Theorem}
\newtheorem{wniosek}[lemat]{\indent\sc Corollary}
\newcounter{conum} \setcounter{conum}{-1}
\renewcommand{\Re}{\ensuremath{\operatorname{Re}}}
\begin{document}

\title{Estimates of transition densities and their derivatives for jump L\'evy processes}
\author{Kamil Kaleta, Pawe{\l} Sztonyk}
\footnotetext{ Kamil Kaleta \\ Institute of Mathematics,
  University of Warsaw,
  ul. Banacha 2,
  02-097 Warszawa, Poland,\\
  and \\ Institute of Mathematics and Computer Science,
  Wroc{\l}aw University of Technology,
  Wybrze{\.z}e Wyspia{\'n}\-skie\-go 27,
  50-370 Wroc{\l}aw, Poland.\\
  {\rm e-mail: KKaleta@mimuw.edu.pl, Kamil.Kaleta@pwr.wroc.pl} \\
}
\footnotetext{ Pawe{\l} Sztonyk \\ Institute of Mathematics and Computer Science,
  Wroc{\l}aw University of Technology,
  Wybrze{\.z}e Wyspia{\'n}\-skie\-go 27,
  50-370 Wroc{\l}aw, Poland.\\
  {\rm e-mail: Pawel.Sztonyk@pwr.wroc.pl} \\
}
\maketitle

\begin{center}
  Abstract
\end{center}
\begin{scriptsize}
  We give upper and lower estimates of densities of convolution semigroups of probability measures under explicit assumptions
  on the corresponding L\'evy measure and the L\'evy--Khinchin exponent. We obtain also estimates of derivatives of densities.
\end{scriptsize}

\footnotetext{2000 {\it MS Classification}:
Primary 60G51, 60E07; Secondary 60J35, 47D03, 60J45 .\\
{\it Key words and phrases}: stable process, layered stable process, tempered stable process, semigroup of measures, transition density, heat kernel.\\
K. Kaleta was supported by the National Science Center (Poland) internship
grant on the basis of the decision No. DEC-2012/04/S/ST1/00093. P. Sztonyk was supported by the National Science Center (Poland) grant on the basis of the decision No. DEC-2012/07/B/ST1/03356.
}
\section{Introduction}\label{Intro}

Let $d\in\{1,2,\dots\}$, $b\in\Rd$, and $\nu$ be a L\'evy measure on $\Rd$, i.e.,
$$
  \int_{\Rd} \left(1\wedge |y|^2\right)\,\nu(dy) < \infty.
$$ 

We always assume that $\nu(\Rd)=\infty$ and consider the convolution semigroup of probability measures $\{ P_t,\, t\geq 0 \}$ with the Fourier transform $\Fourier(P_t)(\xi)=\int_{\Rd} e^{i\scalp{\xi}{y}}P_t(dy)=\exp(-t\Phi(\xi))$, where
$$
  \Phi(\xi) =   - \int \left(e^{i\scalp{\xi}{y}}-1-i\scalp{\xi}{y}\indyk{B(0,1)}(y)\right)\nu(dy) - i\scalp{\xi}{b}  ,\quad \xi\in\Rd.
$$

There exists a L\'evy process $\{X_t,\,t\geq 0\}$
corresponding to the semigroup $\{P_t,\,t\geq 0\}$, i.e., each measure $P_t$ is the transition function of $X_t$.
For the rotation invariant $\alpha$-stable L\'evy processes we have $\nu(dy)=c|y|^{-d-\alpha}$ and $b=0$, where $\alpha\in (0,2)$. 
The asymptotic behaviour of its densities $p_t$
is well known (see, e.g., \cite{BG60}) and in this case we have $p_t(x)\approx \min(t^{-d/\alpha},t|x|^{-d-\alpha})$.
Explicit estimates for the first derivative of the transition 
density in this case are given in \cite[Lemma 5]{BJ07} and we have $|\nabla_x p(1,x)|\leq c|x|(1+|x|)^{-d-\alpha-2}$.

W.E. Pruitt and S.J. Taylor investigated in \cite{PT69} stable densities in the general setting, i.e., 
$\nu(dr d\theta) = r^{-1-\alpha}dr\mu(d\theta)$, where $\mu$ is a bounded measure on the unit sphere $\sfera$. 
They obtained the estimate $p_1(x)\leq c (1+|x|)^{-1-\alpha}$. Indeed, the upper bound can be attained if the spectral measure $\mu$ has an atom (see the estimates from below in \cite{H94} and \cite{H03}). P. G{\l}owacki and W. Hebisch proved in \cite{G93} and \cite{GH93} that if $\mu$
has a bounded density, $g_{\mu}$, with respect to the surface measure on $\sfera$ then $p_1(x)\leq c (1+|x|)^{-d-\alpha}$. When $g_{\mu}$ is continuous on $\sfera$ we even have
$\lim_{r\to\infty}r^{d+\alpha}p_1(r\theta)=cg_{\mu}(\theta)$, $\theta\in\sfera$ and if $g_{\mu}(\theta)=0$ then additionally $\lim_{r\to\infty}r^{d+2\alpha}p_1(r\theta)=c_{\theta} >0$, which was proved by J. Dziuba\'nski in \cite{D91}.

More recent asymptotic results for stable L\'evy processes are given 
in papers \cite{W07} and \cite{BS2007}. In particular if for some $\gamma\in[1,d]$ 
the measure $\nu$ is a $\gamma$--measure on $\sfera$ , i.e., 
$$
  \nu(B(x,r))\leq c r^{\gamma} \quad  \mbox{for every}\quad x \in\sfera,\, r\leq 1/2,
$$  
or equivalently 
$$
\mu(B(\theta,r)\cap \sfera)\leq c r^{\gamma-1},\quad  \theta\in\sfera,\, r\leq 1/2,
$$ 
then we have 
$$
  p_1(x)\leq c\, (1+|x|)^{-\alpha-\gamma},\quad x\in\Rd.
$$ 
Here and below we denote $B(x,r)=\{y\in\Rd:\:|y-x|<r\}$. By scaling $p_t(x)\leq c t^{-d/\alpha}(1+t^{-1/\alpha}|x|)^{-\alpha-\gamma}$ for every $t>0$. It follows also from 
\cite[Theorem 1.1]{W07} that if for some $\theta_0\in\sfera$ we have
$$
\mu(B(\theta_0,r)\cap \sfera)\geq c r^{\gamma-1},\quad  r\leq 1/2,
$$
then 
$$
  p_1(r\theta_0)\geq c\, (1+r)^{-\alpha-\gamma},\quad r>0.
$$

The estimates for more general L\'evy processes were next obtained in \cite{S10,S11,KnopKul,KSch1}. 
A recent paper \cite{BGR13} contains some estimates of densities for isotropic unimodal L\'evy processes 
with L\'evy-Khintchine exponents having the weak local scaling at infinity. Bounds for the transition density of a class of
Markov processes with jump intensities which are not necessarily translation
invariant but dominated by the L\'evy measure of the stable rotation invariant
process were given in \cite{ChKimKum,ChKum08,KSz}. Estimates for processes wich are solutions of some stochastic differential equations driven by L\'evy processes were given in \cite{Picard97}.

The main goal of the present paper is to extend the estimates in \cite{S10,S11} to more general class
of semigroups and processes. We want to emphasize that we consider an essentially wider class of L\'evy processes with 
L\'evy measures not necessarily absolutely continuous with respect to the underlying (e.g., Lebesgue) measure. 
We also include here processes with intensities of small jumps remarkably 
different than the stable one. The time-space asymptotics of the densities for this class of processes is still very little understood (see \cite{Mimica1,Mimica2}). The other novelty here are the estimates of the derivatives of the densities.

For a set $A\subset\Rd$ we denote $\delta(A)=\dist(0,A)=\inf\{|y|:\:y\in A\}$ and
$\diam(A)=\sup\{|y-x|:\:x,y\in A\}$. By ${\mathcal{B}}(\Rd )$ we denote Borel sets in $\Rd$.
We denote
$$
  \Psi(r)=\sup_{|\xi|\leq r} \Re \left(\Phi(\xi)\right),\quad r>0.
$$
We note that $\Psi$ is continuous and nondecreasing and $\sup_{r>0} \Psi(r)=\infty$, since $\nu(\Rd)=\infty$ (see \eqref{eq:Lm<Psi}). 
Let $\Psi^{-1}(s)=\sup\{r>0: \Psi(r)=s\}$ for
$s\in (0,\infty)$ so that $\Psi(\Psi^{-1}(s))=s$ for $s\in (0,\infty)$ and $\Psi^{-1}(\Psi(s))\geq s$ for $s>0$. Define
$$
  h(t)=\frac{1}{\Psi^{-1}\left(\frac{1}{t}\right)},\quad t>0.
$$
The function $h$ gives global estimates of densities of L\'evy processes (see Lemma \ref{lm:global_est_above}
and the metric defined in \cite{JKLSch2012}) and it appears also
in \cite{SchSW12} where gradient estimates of semigroups were proved.

The main results of the present paper are the following theorems.

\begin{twierdzenie}\label{th:main} Assume that
$\nu$ is a L\'evy measure such that $\nu(\Rd)=\infty$ and
\begin{equation}\label{eq:nu_estim}
  \nu(A) \leq M_1 f(\delta(A))[\diam(A)]^{\gamma},\quad A\in{\mathcal{B}}(\Rd ),
\end{equation}
where $\gamma\in[0,d]$, and $f:\:[0,\infty)\to [0,\infty]$ is nonincreasing function satisfying
\begin{equation}\label{eq:tech_assumpt}
  \int_{|y|>r} f\left(s\vee |y|-\frac{|y|}{2} \right) \,\nu(dy) 
  \leq 
  M_2 f(s) \Psi\left(\frac{1}{r} \right),\quad s>0,r>0,
\end{equation}
for some constants $M_1,M_2>0$.
We assume also that for a constant $M_3>0$ and a nonempty set $T\subseteq (0,\infty)$ we have
\begin{equation}\label{eq:Fourier_int}
  \int_{\Rd} e^{-t\Re\left(\Phi(\xi)\right)}|\xi|\, d\xi \leq M_3 \left(h(t)\right)^{-d-1},
  \quad t\in T.
\end{equation}

Then the measures $P_t$ are absolutely continuous with respect to the Lebesgue measure and there exist constants
$C_1,C_2,C_3$ such that their densities $p_t$ satisfy
\begin{eqnarray*}
  p_t(x+tb_{h(t)}) 
  & \leq & C_1 \left(h(t)\right)^{-d} \min\left\{ 1, t\left[h(t)\right]^{\gamma}
           f\left(|x|/4\right)
          + \, e^{-C_2 \frac{|x|}{h(t)}\log\left(1+\frac{C_3|x|}{h(t)}\right)}
            \right\},\\
   &     & x\in\Rd,\, t\in T,
\end{eqnarray*}
where
\begin{equation}\label{eq:def_br}
  b_r = \left\{
  \begin{array}{ccc}
    b - \int_{r<|y|<1} y \, \nu(dy) & \mbox{  if  } & r \leq 1,\\
    b + \int_{1<|y|<r} y \, \nu(dy) & \mbox{  if  } & r > 1.
  \end{array}\right.
\end{equation}
\end{twierdzenie}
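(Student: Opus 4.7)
The strategy is a Meyer-type decomposition of the L\'evy measure at the intrinsic scale $r = h(t)$, combined with Fourier inversion; the specific centering $tb_{h(t)}$ in the statement is dictated precisely by this choice of truncation. The global on-diagonal bound $p_t(y) \leq C h(t)^{-d}$---which furnishes the $1$ inside the minimum---follows from Fourier inversion: starting from $p_t(y) \leq (2\pi)^{-d}\int e^{-t\Re(\Phi(\xi))}\,d\xi$, I would split the integral at $|\xi| = 1/h(t)$, bound $1 \leq h(t)|\xi|$ on the outer piece and invoke \eqref{eq:Fourier_int}, and use $\Re(\Phi(\xi)) \leq \Psi(|\xi|) \leq \Psi(1/h(t)) = 1/t$ on the inner piece.

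For the decay estimate, set $r = h(t)$, $\lambda_r := \nu(B(0,r)^c)$, and decompose $\nu = \nu_s + \nu_\ell$ with $\nu_s = \nu|_{B(0,r)}$, $\nu_\ell = \nu|_{B(0,r)^c}$. The drift $b_r$ from \eqref{eq:def_br} is exactly what is needed so that $X - tb_r$ is (in law) the independent sum of a centered small-jump process $Y^{(r)}$ driven by $\nu_s$ and a compound Poisson process with jump measure $\nu_\ell$, which yields the Poisson expansion
\begin{equation*}
 p_t(x + tb_r) = e^{-t\lambda_r}\sum_{n=0}^{\infty}\frac{t^n}{n!}\,\bigl(q_t^{(r)} * \nu_\ell^{*n}\bigr)(x),
\end{equation*}
where $q_t^{(r)}$ is the density of $Y^{(r)}_t$. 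The calibration $r = h(t)$ is essential because it forces $t\lambda_r$ to be bounded, via the standard tail bound $\nu(B(0,r)^c) \leq C\Psi(1/r)$ (referenced as \eqref{eq:Lm<Psi} in the excerpt) and $\Psi(1/h(t)) = 1/t$. Since $0 \leq \Re(\Phi(\xi)) - \Re(\Phi_s(\xi)) \leq 2\lambda_r$, the Fourier argument of the previous paragraph repeats with only a harmless multiplicative factor $e^{2t\lambda_r}$, giving $q_t^{(r)}(y) \leq C h(t)^{-d}$ uniformly, so the $n = 0$ contribution to the expansion is already controlled by $Ch(t)^{-d}$.

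For $n \geq 1$, I would prove by induction, using \eqref{eq:nu_estim} together with \eqref{eq:tech_assumpt}, a pointwise bound of the shape
\begin{equation*}
 \bigl(q_t^{(r)} * \nu_\ell^{*n}\bigr)(x) \leq C h(t)^{-d}\,M_1 h(t)^{\gamma} f(|x|/4)\,(CM_2\lambda_r)^{n-1} + \text{shorter-range remainder},
\end{equation*}
in which \eqref{eq:tech_assumpt} is precisely what allows convolution with $\nu_\ell$ to reproduce a bound of the form $f(|\cdot|/4)$ at the cost of a single multiplicative factor $M_2\Psi(1/r) = M_2/t$. The $n = 1$ term then produces $Ct\,h(t)^{-d+\gamma} f(|x|/4)$, matching the polynomial summand of the theorem. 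To extract the exponential summand, one splits the series at $n_0 \sim |x|/h(t)$: for $n < n_0$ the shorter-range remainder is dominated by the polynomial term, while for $n \geq n_0$ Stirling's formula applied to $\sum_{n\geq n_0}(t\lambda_r)^n/n!$, together with the boundedness of $t\lambda_r$, produces the factor $\exp\bigl(-C_2(|x|/h(t))\log(1 + C_3|x|/h(t))\bigr)$ with the claimed constants.

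The principal obstacle is the inductive convolution bound in the third paragraph: propagating \eqref{eq:nu_estim} through arbitrary $n$-fold convolutions with $\nu_\ell$, while retaining the single factor $f(|x|/4)$ and accruing only the per-jump cost $\Psi(1/r) = 1/t$, is the technical heart of the argument. Condition \eqref{eq:tech_assumpt} is engineered exactly to close this induction---it plays, for general L\'evy measures, the role of the simple homogeneity available in the stable setting of \cite{S10,S11}. A secondary subtlety is the Stirling bookkeeping needed to sharpen the Poisson tail into the precise logarithmic-exponential form, rather than a weaker stretched-exponential bound.
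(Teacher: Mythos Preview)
Your decomposition at scale $r=h(t)$, the Poisson expansion, and the inductive control of $\nu_\ell^{*n}$ via \eqref{eq:nu_estim} and \eqref{eq:tech_assumpt} all match the paper's architecture. The paper carries out the induction on the measures $\bar{\nu}_r^{*n}(A)$ themselves (its Corollary~\ref{cor:nub_est}), obtaining $\bar{P}_t(B(x,\rho))\leq c\,t f(|x|/4)\rho^{\gamma}$ for $\rho\leq |x|/2$, and then combines with the small-jump density via a layer-cake integration rather than term by term; but that reorganisation is not essential.

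What is a genuine gap is your treatment of the exponential--logarithmic summand. In your scheme the only information about the small-jump density $q_t^{(r)}$ is the flat bound $q_t^{(r)}\leq C h(t)^{-d}$. Consider the $n=0$ term of the expansion: it equals $e^{-t\lambda_r}q_t^{(r)}(x)$, and your bound gives nothing better than $C h(t)^{-d}$, independent of $|x|$. The same problem contaminates the ``shorter-range remainder'' for every small $n$: that remainder is precisely the contribution from configurations where the small-jump process carries most of the distance to $x$, and it is \emph{not} dominated by the polynomial term $t[h(t)]^{\gamma}f(|x|/4)$ unless you already know $q_t^{(r)}$ decays in $|x|$. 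Your Stirling argument for $n\geq n_0\sim |x|/h(t)$ is correct, but it is vacuous for the off-diagonal estimate because the low-$n$ block, and in particular $n=0$, is left uncontrolled.

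The paper resolves this by proving first (Lemma~\ref{lm:small_jumps_est}) the pointwise decay
\[
\tilde{p}_t(x)\leq C\,[h(t)]^{-d}\exp\Bigl(-C\,\tfrac{|x|}{h(t)}\log\bigl(1+\tfrac{C|x|}{h(t)}\bigr)\Bigr),
\]
which relies on the fact that $\tilde{\nu}_{h(t)}$ has support in $B(0,h(t))$ and on the bounded-support density estimate of \cite{S11}. In other words, the exponential--logarithmic term in the theorem originates entirely from the small-jump density, not from the Poisson tail of the large-jump expansion. Once this is in hand, the paper convolves with $\bar{P}_t$ by writing $\tilde{p}_t*\bar{P}_t(x)\leq C[h(t)]^{-d}\int_0^1 \bar{P}_t\bigl(B(x,h(t)g^{-1}(s))\bigr)\,ds$ and splitting at $s=g(|x|/(2h(t)))$; the inner part produces $t[h(t)]^{\gamma}f(|x|/4)$ and the outer part leaves $g(|x|/(2h(t)))$. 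So the missing ingredient in your sketch is precisely a decay estimate for $q_t^{(r)}$; without it the argument does not close.
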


We note that $T$ is an arbitrary subset of $(0,\infty)$ satisfying (\ref{eq:Fourier_int}). In particular
the Theorem \ref{th:main} can be applied either for small or for large times $t$. In Lemma \ref{lm:OdRene} below we give conditions
which yield (\ref{eq:Fourier_int}) for $T=(0,\infty)$.
All the assumptions of Theorem \ref{th:main} are satisfied by a wide class of semigroups 
and corresponding L\'evy 
processes, including stable, tempered stable, layered, relativistic, Lamperti and truncated
stable processes as well as geometric stable (for large times $t$) and some subordinated
processes. 
Some specific examples will be discussed in Section \ref{Examples}. 

The lower estimate
for symmetric L\'evy measures
is given in the following theorem.

\begin{twierdzenie}\label{th:p_est_below}
Assume that the L\'evy measure $\nu$ is symmetric, i.e. $\nu(D)=\nu(-D)$ for every 
$D\in\Borel$, $\nu(\Rd)=\infty$ and (\ref{eq:Fourier_int}) holds for a set $T\subset (0,\infty)$,
and there exists a constant $M_4>0$ such that
\begin{equation}\label{eq:nu_est_below}
  \nu(B(x,r)) \geq M_4 r^\gamma f(|x|+r),\quad x\in A,\, r>0,
\end{equation}
for some $A\in\Borel$, $\gamma\in [0,d]$ and a function $f:\: (0,\infty)\to [0,\infty)$.
Then there exist constants $C_4$, $C_5$ and $C_6>C_5$ 
such that
\begin{equation}\label{eq:p_est_below1}
  p_t(x+tb) \geq C_4 \left(h(t)\right)^{-d}\quad  \mbox{for}\quad |x|<C_6 h(t),\, t\in T,
\end{equation}
\begin{equation}\label{eq:p_est_below2}
  p_t(x+tb) \geq C_4 t\left[h(t)\right]^{\gamma-d} f\left(|x|+C_5 h(t)\right)\quad  
  \mbox{for}\quad  |x|\geq C_6 h(t),\, x\in A,\, t\in T.
\end{equation}
In particular
$$
  p_t(x+tb) \geq C_4 \left(h(t)\right)^{-d}  \min\left\{ 1, t\left[h(t)\right]^{\gamma}
           f\left(\min\{|x|+C_5 h(t),2|x|\}\right)\right\} ,\quad x\in A,\, t\in T.
$$
\end{twierdzenie}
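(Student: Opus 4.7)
The plan is to prove the two bounds (\ref{eq:p_est_below1}) and (\ref{eq:p_est_below2}) separately and deduce the ``In particular'' statement by a case split; the near-diagonal bound will come from a Fourier inversion argument exploiting symmetry of $\nu$, while the far-field bound will come from a L\'evy--It\^o decomposition together with a one-big-jump lower bound.

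For (\ref{eq:p_est_below1}), symmetry of $\nu$ gives $\Phi(\xi)+i\scalp{\xi}{b}=\int(1-\cos\scalp{\xi}{y})\,\nu(dy)=\Re\Phi(\xi)\geq 0$, so Fourier inversion yields
$$p_t(x+tb) = (2\pi)^{-d}\int_{\Rd} e^{-i\scalp{x}{\xi}}\, e^{-t\Re\Phi(\xi)}\,d\xi.$$
At $x=0$, restricting to $|\xi|\leq \Psi^{-1}(1/t)=1/h(t)$ and using $\Re\Phi(\xi)\leq \Psi(|\xi|)\leq 1/t$ on this set gives $p_t(tb)\geq c(h(t))^{-d}$. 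The inequality $|e^{-i\scalp{x}{\xi}}-1|\leq |x||\xi|$ combined with (\ref{eq:Fourier_int}) then yields
$$|p_t(x+tb) - p_t(tb)| \leq (2\pi)^{-d}M_3\,|x|\,(h(t))^{-d-1},$$
so choosing $C_6$ small enough makes the right-hand side at most $\tfrac{1}{2}p_t(tb)$ whenever $|x|\leq C_6 h(t)$, proving (\ref{eq:p_est_below1}).

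For (\ref{eq:p_est_below2}), I split $\nu=\nu_1+\nu_2$ with $\nu_1=\nu|_{B(0,\kappa h(t))}$ and $\nu_2=\nu|_{B(0,\kappa h(t))^c}$ for a fixed $\kappa$, and use the L\'evy--It\^o decomposition $X_t\stackrel{d}{=}Y^{(1)}_t+Y^{(2)}_t$ with $Y^{(1)}$ carrying the small jumps and the drift $b$, and $Y^{(2)}$ an independent compound Poisson process with L\'evy measure $\nu_2$ (symmetry of $\nu$ eliminates any drift correction between the two pieces). Writing $\lambda_2=\nu_2(\Rd)$, the one-big-jump term of the Poisson expansion gives
$$p_t(x+tb) \geq e^{-\lambda_2 t}\, t \int p^{(1)}_t(x-z)\,\nu_2(dz),$$
where $p^{(1)}_t$ is the centered density of $Y^{(1)}$. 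The standard bound $\nu(B(0,r)^c)\leq c\Psi(1/r)$, together with the doubling property of $\Psi$, yields $\lambda_2 t\leq c_\kappa$, so the prefactor is bounded below. The exponent $\Phi_1(\xi)=\int(1-\cos\scalp{\xi}{y})\,\nu_1(dy)$ satisfies $0\leq \Re\Phi(\xi)-\Phi_1(\xi)\leq 2\lambda_2$, so $e^{-t\Phi_1(\xi)}\leq e^{2\lambda_2 t}e^{-t\Re\Phi(\xi)}\leq C e^{-t\Re\Phi(\xi)}$, which transports (\ref{eq:Fourier_int}) to $\Phi_1$ and lets the Fourier argument of the preceding paragraph deliver $p^{(1)}_t(y)\geq c(h(t))^{-d}$ for $|y|\leq C_5 h(t)$. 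For $|x|\geq C_6 h(t):=(\kappa+C_5)h(t)$ with $x\in A$, every $z\in B(x,C_5 h(t))$ satisfies $|z|\geq \kappa h(t)$, so $\nu_2(B(x,C_5 h(t)))=\nu(B(x,C_5 h(t)))$, which (\ref{eq:nu_est_below}) bounds below by $c(h(t))^\gamma f(|x|+C_5 h(t))$; substituting gives (\ref{eq:p_est_below2}). The combined ``In particular'' statement is then immediate: in this range $|x|+C_5 h(t)\leq 2|x|$, while for $|x|\leq C_6 h(t)$ the minimum on the right-hand side is bounded by $1$, matched by (\ref{eq:p_est_below1}).

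The main obstacle is the on-diagonal estimate for the truncated process $Y^{(1)}$: stripping away the large jumps could a priori inflate $e^{-t\Phi_1}$ enough to destroy the integrability supplied by hypothesis (\ref{eq:Fourier_int}). The trade $e^{-t\Phi_1}\leq e^{2\lambda_2 t}e^{-t\Re\Phi}$ rescues the argument, but only because $\kappa$ is chosen so that $\lambda_2 t$ stays bounded; this constraint simultaneously forces the relation $C_6>C_5$ asserted in the theorem.
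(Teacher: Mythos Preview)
Your argument is essentially the paper's: the near-diagonal bound via Fourier inversion plus a Lipschitz/gradient estimate (this is Lemma~\ref{lm:est_below_0} in the paper), and the far-field bound by truncating $\nu$ at a radius comparable to $h(t)$, keeping the $n=1$ term of the compound-Poisson expansion of $\bar{P}_t$, and inserting the near-diagonal bound for the truncated density. You truncate at $\kappa h(t)$ and control $\lambda_2 t$ by the doubling of $\Psi$ (which is legitimate and follows from Proposition~\ref{prop:1}); the paper truncates at $h(at)$ and uses Lemma~\ref{lm:OdTomkaG} instead. These are the same mechanism in different parametrizations.

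One point must be made explicit, though. In your first paragraph you obtain (\ref{eq:p_est_below1}) only for $|x|\le C_6^{(1)} h(t)$ with a \emph{specific} threshold $C_6^{(1)}$ determined by $M_3$ and the lower bound on $p_t(tb)$; in the second paragraph you \emph{define} $C_6:=\kappa+C_5(\kappa)$. For a single $C_6$ to serve in both (\ref{eq:p_est_below1}) and (\ref{eq:p_est_below2}) you need $\kappa+C_5(\kappa)\le C_6^{(1)}$, and this is not automatic for an arbitrary fixed $\kappa$. It does hold once $\kappa$ is taken small enough: your doubling bound gives $\lambda_2 t\le c_\kappa$ with $c_\kappa\to\infty$ as $\kappa\to 0$, the gradient bound for $p^{(1)}_t$ then carries a factor $e^{2c_\kappa}$, so $C_5(\kappa)\asymp e^{-2c_\kappa}\to 0$, and therefore $\kappa+C_5(\kappa)\to 0$. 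The paper handles exactly this matching via Lemma~\ref{lm:OdTomkaG}, choosing $a$ so that $h(at)/h(t)+c_2 e^{-c_3/a}\le C_6$; your write-up just needs the analogous sentence fixing $\kappa$.
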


The following estimate of derivatives is an extension of the results obtained
for stable processes in \cite{S10a}.

\begin{twierdzenie}\label{th:Derivatives}
If the L\'evy measure $\nu$ and a nonincreasing function $f$ satisfy
(\ref{eq:nu_estim}), (\ref{eq:tech_assumpt}), $\nu(\Rd)=\infty$ and there exist a constant $M_5>0$ and a set $T\subseteq (0,\infty)$ such that
\begin{equation}\label{eq:Fourier_derivatives}
   \int_{\Rd} e^{-t\Re\left(\Phi(\xi)\right)}|\xi|^m\, d\xi \leq M_5 \left(h(t)\right)^{-d-m},
  \quad t\in T,
\end{equation}
for some $m\in\N_0$, $m>\gamma$, 
then $p_t\in C^m_b(\Rd)$ and for every $n\in\N_0$ such that $m\geq n>\gamma$
and every $\beta\in\N^d_0$ such that $|\beta|\leq m-n$ there exists a constant $C_7=C_7(n,m)$ such that
\begin{eqnarray}\label{eq:der_est}
  |\partial^\beta_x p_t(x+tb_{h(t)})| 
  & \leq & C_7 \left(h(t)\right)^{-d-|\beta|} \min\left\{ 1,\,\, t\left[h(t)\right]^{\gamma}
           f\left(|x|/4\right)
          + \, \left(1+\frac{|x|}{h(t)}\right)^{-n}
            \right\},\nonumber \\
  &      & x\in\Rd,\, t\in T,
\end{eqnarray}
where $b_{h(t)}$ is given by (\ref{eq:def_br}).
\end{twierdzenie}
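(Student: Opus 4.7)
The strategy mirrors the proof of Theorem~\ref{th:main}, using Fourier inversion for the small-$|x|$ bound and a Meyer-type decomposition of $\nu$ into small and large jumps at scale $r=h(t)$ for the tail. The moment hypothesis (\ref{eq:Fourier_derivatives}) serves two purposes: it justifies differentiating the Fourier-inverted density up to order $m$, and it produces the polynomial decay factor $(1+|x|/h(t))^{-n}$ that replaces the exponential tail of Theorem~\ref{th:main}. Concretely, for $|\beta|\leq m$ one has
$$
\partial^\beta_x p_t(x)=(2\pi)^{-d}\int_{\Rd}(-i\xi)^\beta e^{-i\scalp{\xi}{x}}e^{-t\Phi(\xi)}\,d\xi,
$$
so $p_t\in C^m_b(\Rd)$ with $|\partial^\beta p_t(x)|\leq M_5[h(t)]^{-d-|\beta|}$, which already yields the ``$1$'' inside the minimum.

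\textbf{Small-jump density.} Set $r=h(t)$ and split $\nu=\nusj+\nulj$ with $\nusj=\nu|_{B(0,r)}$ and $\nulj=\nu|_{B(0,r)^c}$. Let $\Plj_t$ be the L\'evy semigroup with jump measure $\nusj$ and drift chosen so that $P_t=\Plj_t*Q_t$, where $Q_t=e^{-t\lambda}\sum_{k\geq 0}(t^k/k!)\nulj^{*k}$ and $\lambda=\nulj(\Rd)$; the required drift shift is exactly the one encoded by $tb_{h(t)}$. Since $\Re\bar\Phi\leq\Re\Phi$, (\ref{eq:Fourier_derivatives}) gives $\|\partial^\beta \bar p_t\|_\infty\leq C[h(t)]^{-d-|\beta|}$. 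Exploiting that $\nusj$ is supported in $B(0,r)$, a Fourier moment argument (multiply by $x^\alpha$ and integrate by parts in $\xi$, controlling derivatives of $\bar\Phi$ via (\ref{eq:tech_assumpt}) and the compact support of $\nusj$) refines this to the polynomial decay
$$
|\partial^\beta \bar p_t(x)|\leq C\,[h(t)]^{-d-|\beta|}\left(1+\tfrac{|x|}{h(t)}\right)^{-n},\qquad 0\leq n\leq m-|\beta|.
$$

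\textbf{Assembling the series.} From the decomposition above,
$$
\partial^\beta_x p_t(x+tb_{h(t)})=e^{-t\lambda}\sum_{k=0}^\infty\frac{t^k}{k!}\int_{\Rd}\partial^\beta \bar p_t(x+tb_{h(t)}-y)\,\nulj^{*k}(dy).
$$
The $k=0$ term supplies the $(1+|x|/h(t))^{-n}$ summand directly from the previous step. For $k\geq 1$ split the domain at $|y|=|x|/2$: on $\{|y|\leq|x|/2\}$ use the polynomial decay of $\partial^\beta \bar p_t$, on $\{|y|>|x|/2\}$ use $\|\partial^\beta \bar p_t\|_\infty$ together with (\ref{eq:nu_estim}) summed over shells around $x$. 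The $k=1$ case produces the $t[h(t)]^{\gamma} f(|x|/4)$ summand. For $k\geq 2$, iterate (\ref{eq:tech_assumpt}) to bound $\nulj^{*k}$-integrals by $[\Psi(1/h(t))]^{k-1}$ times an $f$-tail; combined with the factors $t^k/k!$ and $t\lambda\leq Ct\Psi(1/h(t))=C$, these terms are absorbed into the two main summands, with summability following from $n>\gamma$. Taking the minimum with the Step~1 bound completes (\ref{eq:der_est}).

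\textbf{Main obstacle.} The delicate point is the polynomial decay of $\partial^\beta \bar p_t$ in the second paragraph: one has to obtain the rate $(1+|x|/h(t))^{-n}$ without any global scaling assumption on $\Phi$, relying solely on (\ref{eq:tech_assumpt}) and the compact support of $\nusj$. The exponent $n$ that can be attained is dictated precisely by how many times (\ref{eq:Fourier_derivatives}) allows one to integrate by parts on the Fourier side. Once this is in hand, the series assembly is essentially technical bookkeeping, but the hypothesis $n>\gamma$ is exactly what ensures that the iterated moment estimates on $\nulj^{*k}$ produce a convergent series and controlled constants.
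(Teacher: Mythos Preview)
Your overall strategy matches the paper's: Fourier inversion for the global bound, the split $P_t=\tilde P_t*\bar P_t*\delta_{tb_{h(t)}}$ at scale $r=h(t)$, polynomial decay of the small-jump density derivatives, and assembly by convolution with the compound-Poisson part. (Your $\bar p_t$ is the paper's $\tilde p_t$; the paper reserves $\bar P_t$ for the large-jump factor.) Two points need correction.

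First, the inequality ``$\Re\bar\Phi\le\Re\Phi$'' goes the wrong way for your purpose: it gives $e^{-t\Re\bar\Phi}\ge e^{-t\Re\Phi}$, so (\ref{eq:Fourier_derivatives}) does not directly control $\int e^{-t\Re\bar\Phi}|\xi|^m\,d\xi$. The paper instead uses $\Re\tilde\Phi\ge\Re\Phi-2\nu(B(0,h(t))^c)$ together with (\ref{eq:Lm<Psi}) to obtain $|\Fourier(\tilde P_t)(\xi)|\le e^{2M_0}e^{-t\Re\Phi(\xi)}$ (see (\ref{eq:FTildePhest})), which is the correct substitute. Second, the polynomial decay of $\partial^\beta\tilde p_t$ (Lemma~\ref{lm:derest}) does not use (\ref{eq:tech_assumpt}) at all; it is obtained by rescaling so that the truncated L\'evy measure lives in $B(0,1)$, whence all its moments are uniformly bounded, and then quoting \cite[Proposition~2.1]{SchSW12}.

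For the assembly the paper does not expand the exponential series and split each term at $|y|=|x|/2$. It uses the ball estimate $\bar P_t(B(x,\rho))\le c\,t\,f(|x|/4)\rho^\gamma$ for $\rho<|x|/2$ from (\ref{eq:bar_P_schonwieder}) (where the series has already been summed via Corollary~\ref{cor:nub_est}) and a layer-cake computation: one writes $\int(1+|x-y|/h(t))^{-n}\,\bar P_t(dy)=\int_0^1\bar P_t\bigl(B(x,h(t)(s^{-1/n}-1))\bigr)\,ds$ and splits the $s$-integral at $s=(1+|x|/(2h(t)))^{-n}$. The hypothesis $n>\gamma$ then enters exactly as the integrability of $s^{-\gamma/n}$ on $(0,1)$. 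Your explicit-series/region-splitting route would also reach the goal, but the paper's packaging is shorter because it reuses the $\bar P_t$-ball bound already established in the proof of Theorem~\ref{th:main}.
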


In Section \ref{Symbol} we give estimates of the real part of the characteristic exponent $\Re\Phi$ and the function $\Psi$ in terms of the L\'evy measure $\nu$ and we
consider sufficient conditions for assumptions (\ref{eq:Fourier_int}) and (\ref{eq:Fourier_derivatives}). We prove also that an inequality
opposite to (\ref{eq:Fourier_derivatives}) holds for every L\'evy measure.
In section \ref{Proof} we prove all the main theorems.
In Section \ref{Examples} we discuss examples. We focus on the specific type of L\'evy measures
$\nu$ such that $\nu(drd\theta)\approx r^{-1-\alpha}[\log(1+r^{-\kappa})]^{-\beta} dr\mu(d\theta)$ for suitable constants $\alpha,\kappa,\beta$
and a nondegenerate measure $\mu$ on the unit sphere $\sfera$.

We use $c,C,M$ (with subscripts) to denote finite positive constants
which depend only on $\nu$, $b$, and the dimension $d$. Any {\it additional} dependence
is explicitly indicated by writing, e.g., $c=c(n)$.
We write $f(x)\approx g(x)$ to indicate that there is a constant $c$ such that $c^{-1}f(x) \leq g(x) \leq c f(x)$.

\section{Estimates of characteristic exponent}\label{Symbol}

The characteristic exponent (symbol) $\Phi$ of the process is a continuous negative definite function and its basic properties are given, e.g., in \cite{J1}. In Proposition \ref{prop:1} we obtain both sides estimates for $\Re\Phi$ and 
$\Psi(r)=\sup_{|\xi|\leq r} \Re \left(\Phi(\xi)\right)$. 
We note that the estimates for $\Psi$ follow also from
combined results of \cite{Sch98}, Remark 4.8 and Section 3. of \cite{P81} but we include here a short direct proof (see also Lemma 6 in \cite{Grz2013}).

\begin{prop}\label{prop:1} Let 
 $$
   H(r)=\int 1\wedge \frac{|y|^2}{r^2}\,\nu(dy),\quad r>0.
 $$
 We have
 \begin{equation}\label{eq:PhiE}
   (1-\cos 1) \int_{|y|<1/|\xi|} |\scalp{\xi}{y}|^2 \nu(dy) \leq \Re(\Phi(\xi))\leq 2 H(1/|\xi|),\quad \xi\in\Rd\setminus\{0\},
 \end{equation}
 and there exists a constant $C_8$ such that
 \begin{equation}\label{eq:PsiE}
   C_8 H(1/r) \leq \Psi(r) \leq 2 H(1/r),\quad r>0.
 \end{equation}
\end{prop}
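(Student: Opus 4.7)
My plan is to work throughout with the identity
\[
\Re\Phi(\xi) \;=\; \int_{\Rd}\bigl(1-\cos\scalp{\xi}{y}\bigr)\,\nu(dy),
\]
which holds because the drift $-i\scalp{\xi}{b}$ and the compensator $-i\scalp{\xi}{y}\indyk{B(0,1)}(y)$ appearing in the definition of $\Phi$ are purely imaginary. Both directions of \eqref{eq:PhiE} are then consequences of the elementary inequalities $1-\cos u\leq 2(1\wedge u^2)$ (valid for all $u\in\R$) and $1-\cos u\geq (1-\cos 1)\,u^2$ (valid for $|u|\leq 1$); the latter rests on the monotonicity of $u\mapsto (1-\cos u)/u^2$ on $(0,1]$. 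Applying the upper bound with $u=\scalp{\xi}{y}$ and Cauchy--Schwarz gives $\Re\Phi(\xi)\leq 2H(1/|\xi|)$; applying the lower bound on $\{|y|<1/|\xi|\}$, where $|\scalp{\xi}{y}|<1$, and discarding the rest of the integration domain yields the lower bound in \eqref{eq:PhiE}.

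The upper inequality $\Psi(r)\leq 2H(1/r)$ is then immediate, because $|\xi|\mapsto H(1/|\xi|)$ is nondecreasing.

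The main work, and the main obstacle, is the lower bound in \eqref{eq:PsiE}. Since $\Psi(r)$ dominates the spherical mean of $\Re\Phi$, I would start from
\[
\Psi(r)\;\geq\; \frac{1}{|\sfera|\,r^{d-1}}\int_{|\xi|=r}\Re\Phi(\xi)\,\sigma(d\xi),
\]
interchange the $\xi$- and $y$-integrations, and split the inner integral across $\{|y|<1/r\}$ and $\{|y|\geq 1/r\}$. On the small-$y$ piece the pointwise lower bound from \eqref{eq:PhiE} applies together with the identity $\int_{|\xi|=r}|\scalp{\xi}{y}|^2\,\sigma(d\xi)=|\sfera|\,r^{d+1}|y|^2/d$, producing a contribution proportional to $r^2\int_{|y|<1/r}|y|^2\,\nu(dy)$. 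The delicate part is the large-$y$ piece, where I need a uniform lower bound
\[
\frac{1}{|\sfera|}\int_{\sfera}\bigl(1-\cos(s\omega_1)\bigr)\,\sigma(d\omega)\;\geq\; c\;>\;0,\qquad s\geq 1,
\]
equivalently $\sup_{s\geq 1}\phi(s)<1$ for $\phi(s):=|\sfera|^{-1}\int_{\sfera}\cos(s\omega_1)\,\sigma(d\omega)$. This follows from three observations: $\phi$ is continuous, $\phi(s)<1$ for every $s>0$ (otherwise $\cos(s\omega_1)\equiv 1$ $\sigma$-a.e.\ on $\sfera$, forcing $s=0$), and $\phi(s)\to 0$ as $s\to\infty$ by the Riemann--Lebesgue lemma. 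Combining the two pieces produces a lower bound proportional to $r^2\int_{|y|<1/r}|y|^2\nu(dy)+\nu(\{|y|\geq 1/r\})=H(1/r)$, which is \eqref{eq:PsiE} with an explicit constant $C_8$.
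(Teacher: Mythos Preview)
Your treatment of \eqref{eq:PhiE} and of the upper bound in \eqref{eq:PsiE} is correct and essentially the same as the paper's.

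For the lower bound in \eqref{eq:PsiE}, your route via the spherical mean and Riemann--Lebesgue is genuinely different from the paper's: the paper averages $\Re\Phi$ over the full ball $\{|\xi|<r\}$ and then controls the large-$|y|$ contribution by an explicit computation involving the sets $M_\delta=\bigcup_{k\in\Z}(\delta+2\pi k,2\pi-\delta+2\pi k)$ and a suitably small $\delta$. Your argument is cleaner, but it has a gap: it fails in dimension $d=1$, which the paper explicitly allows. In $d=1$ the sphere $\{|\xi|=r\}$ is just $\{-r,r\}$, the spherical average reduces to $\Re\Phi(r)$, and your function becomes $\phi(s)=\cos s$, so that $\phi(2\pi)=1$ and $\sup_{s\geq1}\phi(s)=1$. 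Concretely, if $\nu$ carries a large atom at $y=2\pi$, that atom contributes nothing to $\Re\Phi(1)$ but contributes its full mass to $H(1)$, so no universal inequality $\Re\Phi(r)\geq C_8 H(1/r)$ can hold.

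The fix is simply to average over the solid ball $\{|\xi|\leq r\}$ rather than the sphere. The relevant function is then
\[
\tilde\phi(s)=\frac{1}{|B(0,1)|}\int_{|\eta|\leq1}\cos(s\eta_1)\,d\eta,
\]
the normalised Fourier transform of $\indyk{B(0,1)}\in L^1(\Rd)$. Riemann--Lebesgue now gives $\tilde\phi(s)\to0$ in every dimension (in $d=1$ one has $\tilde\phi(s)=s^{-1}\sin s$), and $\tilde\phi(s)<1$ for $s>0$ since $\eta_1$ has an absolutely continuous distribution under the uniform measure on $B(0,1)$. With this modification your argument goes through for all $d\geq1$ and is somewhat shorter than the paper's explicit $M_\delta$ computation; the price is that the resulting constant $C_8$ is less explicit.
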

\begin{proof}
  We have
\begin{eqnarray*}
  \Re(\Phi(\xi)) 
  &   =  & \int (1-\cos(\scalp{\xi}{y}))\,\nu(dy) \\
  & \leq & \frac{1}{2} \int_{|y|\leq 1/|\xi|} |\scalp{\xi}{y}|^2 \,\nu(dy) + 2\int_{|y| > 1/|\xi|}\,\nu(dy) \\
  & \leq & \frac{1}{2} |\xi|^2 \int_{|y|\leq 1/|\xi|} |y|^2 \,\nu(dy)  + 2\int_{|y| > 1/|\xi|}\,\nu(dy) \\
  & \leq & 2 H(1/|\xi|),\quad \xi\in\Rd\setminus\{0\}.
\end{eqnarray*}
Let $f(s)=(1-\cos s)/s^2$. We have $f'(s)=(s\sin s - 2(1-\cos s))/s^3 $ and $g(s):=s\sin s - 2(1-\cos s)<0$ for $s\in (0,\pi)$,
since $g(0)=0$ and $g'(s)=s\cos s - \sin s <0$, for $s\in (0,\pi)$, hence
$f(s)$ is decreasing on $(0,1)$ and so $1-\cos s \geq (1-\cos 1) s^2,$ for $|s|\leq 1$. We obtain
$$
  \Re(\Phi(\xi))
  \geq (1-\cos 1) \int_{|y|<1/|\xi|} |\scalp{\xi}{y}|^2 \nu(dy),
$$
and (\ref{eq:PhiE}) follows.
The upper estimate in (\ref{eq:PsiE}) follows directly from (\ref{eq:PhiE}). For the lower estimate
we use the obvious inequality
$$
  \int_A g(x)\, dx \leq |A| \sup_{x\in A} g(x).
$$
Let $\delta\in (0,1)$ and $M_\delta = \bigcup_{k\in\Z} (\delta+k2\pi,2\pi-\delta+k2\pi)$, $c_1=(1-\cos\delta)/\delta^2$, $\kappa=3\delta/(2\pi-\delta)$ and let $\omega_0=1$ and $\omega_d=\pi^{d/2}/\Gamma(d/2+1)$ (the volume of the unit ball in $\Rd$).
We get
\begin{eqnarray*}
  \Psi(r) 
  &   =  & \sup_{|\xi|\leq r} \int (1-\cos(\scalp{\xi}{y}))\,\nu(dy) \\
  & \geq & \frac{1}{r^d \omega_d} \int_{|\xi|<r}\int (1-\cos(\scalp{\xi}{y}))\,\nu(dy) d\xi \\
  & \geq & \frac{1}{r^d \omega_d} \int_{|\xi|<r}\left( \int_{|\scalp{\xi}{y}|<\delta} (1-\cos(\scalp{\xi}{y})) \nu(dy) + 
           \int_{\scalp{\xi}{y} \in M_\delta} (1-\cos(\scalp{\xi}{y}))\,\nu(dy)\right) d\xi \\
  & \geq & \frac{1}{r^d \omega_d} \int_{|\xi|<r} \left( c_1 \int_{|\scalp{\xi}{y}|<\delta} |\scalp{\xi}{y}|^2 \nu(dy) + 
           c_1 \delta^2 \int_{\scalp{\xi}{y} \in M_\delta} \,\nu(dy)\right) d\xi \\
  &   =  & \frac{c_1}{r^d \omega_d} \left( \int \int_{|\scalp{\xi}{y}|<\delta,|\xi|<r} |\scalp{\xi}{y}|^2\,d\xi \nu(dy)
           + \delta^2 \int \int_{\scalp{\xi}{y}\in M_\delta,|\xi|<r}\,d\xi \nu(dy) \right) \\
  &   =  & \frac{c_1}{r^d \omega_d} \left( \int |y|^2 \int_{|\xi_1|<\frac{\delta}{|y|},|\xi|<r} \xi_1^2\,d\xi \nu(dy)
           + \delta^2 \int \int_{\xi_1\in \frac{M_\delta}{|y|},|\xi|<r}\,d\xi \nu(dy) \right) \\
  & \geq  & \frac{c_1}{r^d \omega_d} \left( \int_{|y|<\frac{\delta}{\kappa r}} |y|^2 \int_{|\xi|<\kappa r} \xi_1^2\,d\xi \nu(dy)
           + \delta^2 \int_{|y|\geq\frac{\delta}{\kappa r}} \int_{\xi_1\in \frac{M_\delta}{|y|},|\xi|<r}\,d\xi \nu(dy) \right) \\
  &   =   & \frac{c_1}{r^d \omega_d} \left( \int_{|y|<\frac{\delta}{\kappa r}} |y|^2 \frac{\omega_d}{d+2}(\kappa r)^{d+2} \nu(dy)
           + \delta^2 \int_{|y|\geq\frac{\delta}{\kappa r}} \int_{\xi_1\in \frac{M_\delta}{|y|},|\xi|<r}\,d\xi \nu(dy) \right). \\
\end{eqnarray*}
For $r|y|>\delta/\kappa=(2\pi-\delta)/3$ we have
$$
  \int_{\xi_1\not\in \frac{M_\delta}{|y|},|\xi|<r}\,d\xi \leq \omega_{d-1} r^{d-1} \frac{2\delta}{|y|}\left(2\left\lfloor \frac{r|y|+\delta}{2\pi} \right\rfloor +1\right) \leq  \omega_{d-1} r^d 2\kappa,
$$
since if $\left\lfloor \frac{r|y|+\delta}{2\pi}\right\rfloor\geq1$ then also $r|y|\geq2\pi-\delta$.
For $\delta$ such that $2\kappa\omega_{d-1}/\omega_d\leq 1/2$ this yields
$$
  \int_{\xi_1\in \frac{M_\delta}{|y|},|\xi|<r}\,d\xi \geq \frac{1}{2} \omega_d r^d,
$$
and we obtain
\begin{eqnarray*}
  \Psi(r) 
  & \geq & c_1 \left( \frac{\kappa^d}{d+2}  \int_{|y|<\frac{\delta}{\kappa r}}(\kappa r)^{2} |y|^2   \nu(dy) 
           + \frac{1}{2} \int_{|y|\geq\frac{\delta}{\kappa r}}\delta^2 \nu(dy) \right) \\
  &  \geq & \frac{c_1\kappa^d}{d+2} \int \left( |y|\kappa r \wedge  \delta\right)^2  \nu(dy) \geq \frac{c_1\kappa^{d+2}}{d+2} H(1/r).
\end{eqnarray*}
\end{proof}
\noindent

Now we prove the following technical lemma.

\begin{lemat}\label{lm:prawieTauber}
	Assume that for a function $f:\:(0,\infty)\to [0,\infty)$ exist a nonincreasing function $g:\: (0,\infty)\to [0,\infty)$ 
	and constants $m>0$, $a>\kappa\geq 0$, and
	$r_0\geq 0$	such that
  \begin{equation}\label{eq:tech_ass}
    \int_0^r s^a f(s)\, ds \leq m r^{\kappa}g(r)
  \end{equation}
  for every $r > r_0$. Then we have
  $$
    \int_r^\infty f(s)\, ds \leq \frac{ma}{a-\kappa} r^{\kappa-a}g(r),
  $$
  for every $r>r_0$.
\end{lemat}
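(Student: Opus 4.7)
The plan is to convert the integral tail bound into one for $\int_r^\infty f(s)\,ds$ via an integration-by-parts trick, using $F(r) := \int_0^r s^a f(s)\,ds$ as an antiderivative. The hypothesis says exactly $F(r) \leq m r^\kappa g(r)$ for $r > r_0$, and we want to exploit this together with the fact that $g$ is nonincreasing and $a > \kappa$ so that the weight $s^{\kappa-a-1}$ is integrable at infinity.

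Concretely, I would first write, for any $R > r > r_0$,
\[
   \int_r^R f(s)\,ds = \int_r^R s^{-a}\cdot s^a f(s)\,ds = \int_r^R s^{-a}\, dF(s),
\]
and integrate by parts to obtain
\[
   \int_r^R f(s)\,ds = R^{-a}F(R) - r^{-a}F(r) + a\int_r^R s^{-a-1} F(s)\,ds.
\]
Now apply the hypothesis: the boundary term at $R$ satisfies $R^{-a}F(R) \leq m R^{\kappa-a} g(R)$, and since $g$ is nonincreasing we have $g(R)\leq g(r)$; because $\kappa < a$, the factor $R^{\kappa-a}$ tends to $0$, so this term vanishes as $R\to\infty$. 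The boundary term at $r$ is $-r^{-a}F(r) \leq 0$ and can simply be discarded. Using $F(s) \leq m s^\kappa g(s)$ and monotonicity of $g$ in the remaining integral,
\[
   \int_r^\infty f(s)\,ds \leq a \int_r^\infty s^{-a-1} \cdot m s^\kappa g(s)\,ds \leq a m\, g(r) \int_r^\infty s^{\kappa-a-1}\,ds = \frac{am}{a-\kappa}\, r^{\kappa-a} g(r),
\]
which is exactly the desired inequality.

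I do not expect any real obstacle here: the only minor point to be careful about is the finiteness of the limit of $R^{-a}F(R)$ as $R\to\infty$, but this is automatic from $\kappa<a$ and monotonicity of $g$. One should also note that the argument does not require $f$ to be a priori integrable at infinity — the finite-$R$ integration by parts followed by passage to the limit yields both the integrability and the quantitative bound simultaneously.
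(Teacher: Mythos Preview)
Your proof is correct and is essentially the same as the paper's. The paper arrives at the identical identity
\[
a\int_r^\infty t^{-a-1}F(t)\,dt \;=\; r^{-a}F(r) + \int_r^\infty f(s)\,ds
\]
(with $F(t)=\int_0^t s^a f(s)\,ds$) by swapping the order of integration in $\int_r^\infty t^{-a-1}\bigl(\int_0^t s^a f(s)\,ds\bigr)\,dt$ via Tonelli, whereas you obtain it by Stieltjes integration by parts with a finite upper limit $R$ and then let $R\to\infty$; the subsequent use of the hypothesis $F(s)\le m s^\kappa g(s)$ and the monotonicity of $g$ to bound $a\int_r^\infty t^{-a-1}F(t)\,dt$ is the same in both arguments.
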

\begin{proof}
By (\ref{eq:tech_ass}) for every $r>r_0$ we have
$$
  \int_r^\infty \frac{1}{t^{a+1}}   \left( \int_0^t s^a f(s)\, ds\right)\, dt 
 \leq m \int_r^\infty  t^{\kappa-a-1} g(t) \, dt \leq m g(r) \int_r^\infty  t^{\kappa-a-1} \, dt 
 = \frac{m}{a-\kappa}r^{\kappa-a}g(r)
$$
Furthermore, changing the order of integration we obtain
\begin{eqnarray*}
  \int_r^\infty \frac{1}{t^{a+1}}   \left( \int_0^t s^a f(s)\, ds\right) dt 
  &  =   & \int_0^r s^a f(s) \int_r^\infty \frac{1}{t^{a+1}} \, dt \, ds+ \int_r^\infty s^a f(s) \int_s^\infty \frac{1}{t^{a+1}} \, dt\, ds \\
  &  =   & \frac{1}{ar^a} \int_0^r s^a f(s) \, ds + \frac{1}{a} \int_r^\infty f(s)\, ds \\
  & \geq & \frac{1}{a} \int_r^\infty f(s)\, ds,
\end{eqnarray*}
and the lemma follows.

\end{proof}

The following corollaries which give estimates of $\Re\Phi$ for the more specific case of L\'evy measure
follow directly from Proposition \ref{prop:1} and Lemma \ref{lm:prawieTauber}.

\begin{wniosek}\label{l:RePhi>}
  If $\mu$ is nondegenerate, i.e., the support of $\mu$ is not contained in any proper linear subspace
of $\Rd$, and
  $$
    \nu(A) \geq M_6 \int_\sfera \int_0^\infty \indyk{A}(s\theta) f(s)\,  ds\mu(d\theta),
  $$ 
  where $f:\: (0,\infty)\to [0,\infty)$, 
  then
  $$
    \Re(\Phi(\xi)) \geq C_9 |\xi|^2 g_1(1/|\xi|),
  $$
  where $g_1(r)=\int_0^r s^2 f(s)\, ds$.
\end{wniosek}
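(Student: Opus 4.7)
The plan is to combine the lower bound on $\Re(\Phi)$ from Proposition \ref{prop:1} with the assumed lower bound on $\nu$, and then absorb the angular integral using the nondegeneracy of $\mu$.

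First, by \eqref{eq:PhiE},
$$
  \Re(\Phi(\xi)) \geq (1-\cos 1) \int_{|y|<1/|\xi|} |\scalp{\xi}{y}|^2 \,\nu(dy).
$$
Substituting the hypothesized lower bound on $\nu$ and writing $y=s\theta$ with $s\in(0,\infty)$ and $\theta\in\sfera$, I get
$$
  \int_{|y|<1/|\xi|} |\scalp{\xi}{y}|^2 \,\nu(dy)
  \geq M_6 \int_\sfera \int_0^{1/|\xi|} s^2 |\scalp{\xi}{\theta}|^2 f(s)\,ds\,\mu(d\theta)
  = M_6\, g_1(1/|\xi|) \int_\sfera |\scalp{\xi}{\theta}|^2 \,\mu(d\theta).
$$

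The remaining step is to bound $Q(\xi):=\int_\sfera |\scalp{\xi}{\theta}|^2 \mu(d\theta)$ from below by $c|\xi|^2$ for some $c>0$. Writing $Q(\xi)=\xi^T M\xi$ with the symmetric positive semidefinite matrix $M=\int_\sfera \theta\theta^T \mu(d\theta)$, I observe that if $Mv=0$ for some $v\neq 0$ then $\scalp{v}{\theta}=0$ for $\mu$-a.e.\ $\theta\in\sfera$, placing the support of $\mu$ inside the hyperplane $v^\perp$; this contradicts nondegeneracy of $\mu$. Hence $M$ is positive definite, its smallest eigenvalue $\lambda>0$, and $Q(\xi)\geq \lambda|\xi|^2$. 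Combining this with the previous display yields the claim with $C_9=(1-\cos 1)M_6\lambda$.

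The only non-routine ingredient is the nondegeneracy argument producing the uniform lower bound on $Q$; everything else is direct substitution into the lower estimate of Proposition \ref{prop:1}, so no real obstacle is expected.
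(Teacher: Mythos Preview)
Your proof is correct and is precisely the intended argument: the paper states that the corollary follows directly from Proposition~\ref{prop:1}, and your substitution of the polar lower bound for $\nu$ into \eqref{eq:PhiE} together with the positive-definiteness of $M=\int_\sfera \theta\theta^T\,\mu(d\theta)$ is the natural way to carry this out. The Tauberian-type Lemma~\ref{lm:prawieTauber} mentioned alongside Proposition~\ref{prop:1} is needed only for the companion upper bound in Corollary~\ref{l:RePhi<}, not here.
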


\begin{wniosek}\label{l:RePhi<}
  Let $f:\: (0,\infty)\to [0,\infty)$ be such that
  $$
    \nu(A) \leq M_{7} \int_\sfera \int_0^\infty \indyk{A}(s\theta) f(s)\,  ds\mu(d\theta),
  $$
  and
  $$
    \int_0^r s^2 f(s)\, ds \leq M_{8} r^{\kappa}g_2(r),\quad r>0, 
  $$
  for constants $M_{7}, M_{8}>0$, $2>\kappa\geq 0$ and nonincreasing function $g_2:\: (0,\infty)\to [0,\infty)$.
  Then there exists a constant $C_{10}$ such that
  $$
    \Re(\Phi(\xi)) \leq C_{10} |\xi|^{2-\kappa} g_2(1/|\xi|),\quad \xi\in\Rd\setminus\{0\}.
  $$
\end{wniosek}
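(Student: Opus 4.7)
The plan is to chain together three ingredients that are already established in the excerpt: the upper bound $\Re(\Phi(\xi)) \leq 2H(1/|\xi|)$ from Proposition \ref{prop:1}, the polar-coordinate bound on $\nu$, and the Tauberian-type Lemma \ref{lm:prawieTauber}. By Proposition \ref{prop:1} it suffices to control $H(r)$ with $r = 1/|\xi|$. Substituting the hypothesis on $\nu$ into the definition of $H$ and using that $\mu$ is a finite measure on $\sfera$, I would get
$$
H(r) \;\leq\; M_7\,\mu(\sfera) \int_0^\infty \left(1\wedge \frac{s^2}{r^2}\right) f(s)\,ds.
$$

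Next I would split the radial integral at $s=r$ into a near-origin piece $r^{-2}\int_0^r s^2 f(s)\,ds$ and a tail piece $\int_r^\infty f(s)\,ds$. The near-origin piece is controlled directly by the second hypothesis of the corollary, giving $r^{-2}\cdot M_8 r^{\kappa}g_2(r) = M_8 r^{\kappa-2}g_2(r)$. The tail piece is exactly of the shape to which Lemma \ref{lm:prawieTauber} applies: using $a=2$ (and $r_0=0$, since the hypothesis is assumed for all $r>0$; the assumption $\kappa<2$ is precisely the requirement $a>\kappa$), one obtains
$$
\int_r^\infty f(s)\,ds \;\leq\; \frac{2M_8}{2-\kappa}\, r^{\kappa-2} g_2(r).
$$

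Adding the two pieces yields $H(1/|\xi|) \leq C\,|\xi|^{2-\kappa} g_2(1/|\xi|)$ with $C$ depending only on $M_7, M_8, \mu(\sfera)$ and $\kappa$, and the factor $2$ from Proposition \ref{prop:1} then produces the announced bound, with $C_{10} = 2 M_7\mu(\sfera) M_8\bigl(1+\tfrac{2}{2-\kappa}\bigr)$. I do not foresee a genuine obstacle here: the corollary is essentially a repackaging of Proposition \ref{prop:1}'s upper bound together with Lemma \ref{lm:prawieTauber}, and the lemma was established just above for precisely this sort of use. The only mildly delicate point is verifying that the hypothesis $\kappa<2$ matches the assumption $a>\kappa$ needed to apply the lemma, which is immediate.
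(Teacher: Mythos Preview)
Your proof is correct and follows exactly the route the paper intends: the corollary is stated as a direct consequence of Proposition~\ref{prop:1} and Lemma~\ref{lm:prawieTauber}, and your argument is precisely that chain---bound $\Re\Phi$ by $2H(1/|\xi|)$, use the polar-coordinate hypothesis to reduce to a radial integral, split at $s=r$, control the inner piece by the second hypothesis and the tail by Lemma~\ref{lm:prawieTauber} with $a=2$. The only implicit assumption you use is $\mu(\sfera)<\infty$, which the paper takes for granted throughout (cf.\ the introduction, where $\mu$ is a bounded measure on $\sfera$).
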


In the following lemma we will prove that the inequality opposite to (\ref{eq:Fourier_derivatives}) 
holds for every L\'evy measure.

\begin{lemat}\label{lm:Oppos}
  If $\nu(\Rd)=\infty$ then for every $m\in\N_0$ there exists a constant $C_{11}=C_{11}(m)$ such that
  $$
    \int_{\Rd} e^{-t\Re\left(\Phi(\xi)\right)}|\xi|^m \, d\xi \geq C_{11} \left(h(t)\right)^{-d-m},
  $$ 
  for every $t>0$.
\end{lemat}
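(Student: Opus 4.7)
The plan is to lower-bound the integral by restricting the domain of integration to the ball $B(0,1/h(t))$, on which the exponent $-t\Re(\Phi(\xi))$ is bounded from below by an absolute constant, and then compute the resulting elementary integral of $|\xi|^m$ directly.

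First I would use the definition of $h$. Since $\Psi$ is continuous with $\sup_{r>0}\Psi(r)=\infty$, the value $\Psi^{-1}(1/t)$ satisfies $\Psi(\Psi^{-1}(1/t))=1/t$, so $\Psi(1/h(t))=1/t$. By the definition of $\Psi$ as the supremum of $\Re\Phi$ on a ball, for every $\xi$ with $|\xi|\leq 1/h(t)$ we have
$$
  \Re(\Phi(\xi)) \;\leq\; \Psi\!\left(\tfrac{1}{h(t)}\right) \;=\; \tfrac{1}{t}.
$$
Consequently, on this ball $e^{-t\Re(\Phi(\xi))}\geq e^{-1}$, so
$$
  \int_{\Rd} e^{-t\Re(\Phi(\xi))}|\xi|^m\,d\xi
  \;\geq\; e^{-1}\int_{|\xi|\leq 1/h(t)} |\xi|^m\,d\xi.
$$

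Next, passing to polar coordinates in $\Rd$ gives
$$
  \int_{|\xi|\leq 1/h(t)}|\xi|^m\,d\xi
  \;=\; d\,\omega_d \int_0^{1/h(t)} r^{m+d-1}\,dr
  \;=\; \frac{d\,\omega_d}{m+d}\,(h(t))^{-d-m},
$$
where $\omega_d$ is the volume of the unit ball in $\Rd$. Setting $C_{11}=C_{11}(m):=\frac{e^{-1}d\,\omega_d}{m+d}$ yields the claimed inequality, valid for all $t>0$.

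The argument is essentially a one-line consequence of the definition of $h$ coupled with monotonicity of $\Psi$, so there is no serious obstacle here; the only point that requires care is to observe that the assumption $\nu(\Rd)=\infty$ is used solely to guarantee $\sup_{r>0}\Psi(r)=\infty$, so that $\Psi^{-1}(1/t)$ (and hence $h(t)$) is well defined and finite for every $t>0$, which in turn ensures the identity $\Psi(1/h(t))=1/t$ used above.
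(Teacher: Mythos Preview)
Your proof is correct and follows essentially the same approach as the paper's own proof: restrict the integral to the ball $\{|\xi|\leq 1/h(t)\}$, use $\Re(\Phi(\xi))\leq \Psi(1/h(t))=1/t$ there to bound the exponential below by $e^{-1}$, and then evaluate the remaining polynomial integral in polar coordinates. The paper inserts one extra intermediate step (first bounding $\Re\Phi(\xi)\leq\Psi(|\xi|)$ on all of $\Rd$ before restricting), but this is a cosmetic difference; your version is in fact slightly more direct.
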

\begin{proof} Using the fact that $\Psi$ is increasing and $\Psi(\Psi^{-1}(s))=s$ for every $s>0$, we get
  \begin{eqnarray*}
    \int_{\Rd} e^{-t\Re\left(\Phi(\xi)\right)}|\xi|^m \, d\xi 
    & \geq & \int_{\Rd} e^{-t\Psi(|\xi|)}|\xi|^m \, d\xi \\
    &   =  & c_1 \int_0^\infty e^{-t\Psi(s)} s^{m+d-1}\, ds \\
    & \geq & c_1 \int_0^{\Psi^{-1}(1/t)} e^{-t\Psi(s)} s^{m+d-1}\, ds \\
    & \geq & \frac{c_1 e^{-1}}{m+d} \left[\Psi^{-1}(1/t)\right]^{m+d} \\
    &   =  & \frac{c_1 e^{-1}}{m+d} \left[h(t) \right]^{-m-d}.
  \end{eqnarray*}
\end{proof}

 Now we give conditions which guarantee that the assumptions (\ref{eq:Fourier_int})
 and (\ref{eq:Fourier_derivatives}) hold.

\begin{lemat}\label{lm:OdRene}
    Assume that there is a strictly increasing function 
    $F: [0,\infty)\to [0,\infty)$ such that 
    $F(0)=0$, $\lim_{s\to \infty} F(s) = \infty$,
    which is differentiable and which satisfies
    \begin{equation}\label{eq:doublingF}
        F^{-1}(2 s)\leq M_{9} F^{-1}(s),\quad s>0,
    \end{equation}
    and
    $$
       M_{10}^{-1} F(|\xi|) \leq \Re\Phi(\xi) \leq M_{10} F(|\xi|), \quad \xi\in\Rd,
    $$
    for some constants $M_{9}, M_{10}$.
    Then there exists a constant $C_{12}=C_{12}(m)$ such that
    \begin{equation}\label{eq:22}
   C^{-1}_{12}  \left(h(t)\right)^{-d-m} \leq \int_{\Rd} e^{-t\Re\left(\Phi(\xi)\right)}|\xi|^m\, d\xi 
   \leq C_{12}  \left(h(t)\right)^{-d-m},
  \quad t>0.
  \end{equation}
  for every $m\in\N\cup\{0\}$. 
 \end{lemat}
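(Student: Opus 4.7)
\noindent
The lower bound in \eqref{eq:22} is already furnished by Lemma \ref{lm:Oppos} (which needs only $\nu(\Rd)=\infty$), so the entire task is the matching upper bound. My plan is to reduce to a one-dimensional integral via the pointwise lower bound $\Re\Phi \geq M_{10}^{-1} F$ and then split it at the natural scale determined by $F^{-1}(1/t)$.

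Before the main estimate I would check that $h(t)\approx 1/F^{-1}(1/t)$. Because $F$ is nondecreasing, the hypothesis $M_{10}^{-1} F(|\xi|) \leq \Re\Phi(\xi) \leq M_{10} F(|\xi|)$ immediately gives $M_{10}^{-1} F(r) \leq \Psi(r) \leq M_{10} F(r)$ for $r>0$. Monotonicity of $F^{-1}$ then yields $F^{-1}(s/M_{10}) \leq \Psi^{-1}(s) \leq F^{-1}(M_{10}s)$, and iterating the doubling assumption \eqref{eq:doublingF} a fixed number of times (depending on $M_{10}$) converts both sides into constants times $F^{-1}(s)$. Evaluating at $s=1/t$ gives $\Psi^{-1}(1/t) \approx F^{-1}(1/t)$, whence $h(t) \approx 1/F^{-1}(1/t)$.

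For the main estimate, passing to polar coordinates and using $\Re\Phi(\xi) \geq M_{10}^{-1} F(|\xi|)$ reduces the integral to
\begin{equation*}
  \int_{\Rd} e^{-t\Re(\Phi(\xi))}|\xi|^m\, d\xi \;\leq\; c_d \int_0^\infty e^{-tF(r)/M_{10}}\, r^{m+d-1}\,dr.
\end{equation*}
Set $r_0 := F^{-1}(1/t)$ and $r_k := F^{-1}(2^k/t)$ for $k\in\N$. On $[0,r_0]$ I bound the exponential by $1$, obtaining a contribution $\leq r_0^{m+d}/(m+d)$. For the tail $(r_0,\infty)$, I decompose dyadically: on each annulus $[r_{k-1},r_k]$ one has $F(r)\geq 2^{k-1}/t$, and iterating \eqref{eq:doublingF} gives $r_k \leq M_9^{k} r_0$, so
\begin{equation*}
  \int_{r_{k-1}}^{r_k} e^{-tF(r)/M_{10}}\, r^{m+d-1}\,dr \;\leq\; e^{-2^{k-1}/M_{10}}\, \frac{M_9^{k(m+d)}}{m+d}\, r_0^{m+d}.
\end{equation*}
Summing over $k\geq 1$ and combining with $r_0^{m+d}/(m+d)$ from the inner piece yields a constant (depending on $m$, $d$, $M_9$, $M_{10}$) times $r_0^{m+d} = [F^{-1}(1/t)]^{m+d}$, which by the first step is $\approx h(t)^{-m-d}$.

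The only thing that could go wrong is the convergence of the dyadic sum, where the annulus sizes grow as $M_9^{k(m+d)}$ with $M_9$ possibly large. However this is overwhelmed by the doubly-rapid decay $e^{-2^{k-1}/M_{10}}$, so the sum converges unconditionally; this is the one step that truly uses the doubling hypothesis \eqref{eq:doublingF} together with the exponential weight, and it is the crux of the argument.
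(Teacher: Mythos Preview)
Your proof is correct and follows essentially the same route as the paper's: cite Lemma~\ref{lm:Oppos} for the lower bound, establish $h(t)\approx 1/F^{-1}(1/t)$ via $\Psi\approx F$ and the doubling of $F^{-1}$, reduce to a radial integral via $\Re\Phi\geq M_{10}^{-1}F$, and control the tail by a dyadic decomposition where the geometric growth $M_9^{k(m+d)}$ coming from \eqref{eq:doublingF} is beaten by the super-exponential decay $e^{-2^{k-1}/M_{10}}$. The only cosmetic difference is that the paper first substitutes $u=c_2 t F(r)$ and then cuts $u$ dyadically at $2^{n}$, whereas you cut $r$ directly at $r_k=F^{-1}(2^k/t)$; under the substitution these are the same partition, and your version has the minor advantage of not invoking the differentiability of $F^{-1}$.
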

 
 \begin{proof}
   We follow the argumentation given in \cite{SchSW12}, proof of Theorem 1.3.
   We denote $g(s) = F^{-1}(s)$. We have
   $$
     \Psi(s) = \sup_{|x|<s} \Re \Phi(\xi) \approx \sup_{|x|<s} F(|\xi|) = F(s),
   $$
   and this yields
   $$
     \Psi^{-1}(c_1 s) \leq F^{-1}(s) \leq \Psi^{-1}(c_2 s),\quad s>0.
   $$
   It follows from (\ref{eq:doublingF}) that $g(2^n s) \leq c_3^n g(s)$ for $c_3=M_{9}$ 
   and for $c_4=M_{10}^{-1}$ this yields
   \begin{eqnarray*}
     \int e^{-t\Re \Phi(\xi)}|\xi|^m\, d\xi 
     & \leq & \int e^{-tc_4 F(|\xi|)}|\xi|^m\, d\xi \\
     &   =  & c_5 \int_0^\infty e^{-tc_4 F(s)}s^{m+d-1}\, ds \\
     &   =  & c_5 \int_0^\infty e^{-c_4 u/c_2} \left[ g\left(\frac{u}{c_2t}\right) \right]^{m+d-1}
              g'\left(\frac{u}{c_2t}\right)\frac{1}{c_2t}\, du \\
     &  =   & c_5 \left(\int_0^1 + \int_1^\infty\right) \\
     & \leq & \frac{c_5}{m+d} \left[g\left(\frac{1}{c_2t}\right)\right]^{m+d} \\
     &      &  + c_5 \sum_{n=1}^\infty \int_{2^{n-1}}^{2^n}
              e^{-c_4 u/c_2} \left[ g\left(\frac{u}{c_2t}\right) \right]^{m+d-1} 
              g'\left(\frac{u}{c_2t}\right)\frac{1}{c_2t}\, du \\
     & \leq & \frac{c_5}{m+d}\left( \left[g\left(\frac{1}{c_2t}\right)\right]^{m+d} 
                +  \sum_{n=1}^\infty e^{-c_4 2^{n-1}/c_2} \left[g\left(\frac{2^n}{c_2t}\right) \right]^{m+d}\right) \\
     & \leq & \frac{c_5}{m+d}\left( \left[g\left(\frac{1}{c_2t}\right)\right]^{m+d} + 
              \left[g\left(\frac{1}{c_2t}\right) \right]^{m+d} \sum_{n=1}^\infty 
            e^{-c_42^{n-1}/c_2} c_3^{n(m+d)}\right) \\
     &   =  & c_6 \left[g\left(\frac{1}{c_2t}\right)\right]^{m+d} \leq c_6 \left[\Psi^{-1}(1/t) \right]^{m+d} = c_6 \left[h(t) \right]^{-m-d}.
   \end{eqnarray*}
   The estimate from below in (\ref{eq:22}) follows from Lemma \ref{lm:Oppos}.
 \end{proof}

\section{Proof of theorems}\label{Proof}

We will now prove the theorems. In the following we often assume that (\ref{eq:Fourier_int}) is
satisfied which gives the existence of densities $p_t\in C^1_b(\Rd)$ of $P_t$
for $t\in T$. 
We note that several necessary and sufficient conditions for the existence
of (smooth) transition probability densities for L\'evy processes and isotropic L\'evy
processes are are given in \cite{KSch}.

In the following two lemmas we obtain estimates of $p_t$ by constants depending on $t$.

\begin{lemat}\label{lm:global_est_above}
  If $\nu(\Rd)=\infty$ and (\ref{eq:Fourier_int}) holds then there exists a constant $C_{13}$ such that
  $$ 
    p_t(x) \leq C_{13} \left(h(t)\right)^{-d},\quad t\in T.
  $$
\end{lemat}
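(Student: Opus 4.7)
The plan is to bound $p_t(x)$ uniformly in $x$ via Fourier inversion combined with hypothesis (\ref{eq:Fourier_int}). Because (\ref{eq:Fourier_int}) forces $e^{-t\Re\Phi(\cdot)}$ to be integrable on $\Rd$ for $t\in T$ (as already noted at the start of Section \ref{Proof}), the Fourier inversion formula is legitimate and gives
$$p_t(x)=(2\pi)^{-d}\int_{\Rd}e^{-i\scalp{\xi}{x}}e^{-t\Phi(\xi)}\,d\xi,\qquad x\in\Rd,$$
hence
$$p_t(x)\leq (2\pi)^{-d}\int_{\Rd} e^{-t\Re\Phi(\xi)}\,d\xi,$$
an upper bound independent of $x$.

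Next, I would split this integral at the natural radius $R=1/h(t)=\Psi^{-1}(1/t)$. On $\{|\xi|<R\}$ the crude bound $e^{-t\Re\Phi(\xi)}\leq 1$ together with the volume of the ball yields a contribution of order $R^d=(h(t))^{-d}$. On $\{|\xi|\geq R\}$ one inserts the dummy factor $|\xi|/R\geq 1$ and invokes (\ref{eq:Fourier_int}):
$$\int_{|\xi|\geq R} e^{-t\Re\Phi(\xi)}\,d\xi\leq \frac{1}{R}\int_{\Rd} e^{-t\Re\Phi(\xi)}|\xi|\,d\xi\leq \frac{M_3}{R}\bigl(h(t)\bigr)^{-d-1}=M_3\,\bigl(h(t)\bigr)^{-d}.$$
Summing the two contributions gives the claimed estimate with a constant $C_{13}$ independent of $x$ and $t$.

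There is no real obstacle here; this is the standard trick of balancing a trivial $L^\infty$-type bound on low frequencies against the quantitative decay on high frequencies supplied by (\ref{eq:Fourier_int}), with the cut-off chosen at the natural scale $1/h(t)$ built into the semigroup. The only point worth checking is that the Fourier-inversion step is justified, which is precisely the role played by the $L^1$-integrability of $e^{-t\Re\Phi(\cdot)}$ implied by (\ref{eq:Fourier_int}) and already invoked at the beginning of Section \ref{Proof}.
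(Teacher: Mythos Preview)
Your proof is correct and essentially identical to the paper's own argument: both use Fourier inversion, bound $|e^{-t\Phi(\xi)}|$ by $e^{-t\Re\Phi(\xi)}$, split the resulting integral at the radius $1/h(t)$, estimate the inner part by the ball's volume, and handle the outer part by inserting the factor $h(t)|\xi|\geq 1$ so that (\ref{eq:Fourier_int}) applies. There is nothing to add.
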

\begin{proof}
  We have
  \begin{eqnarray*}
    p_t(x)
    &   =  & \left(2\pi\right)^{-d} \int e^{-i\scalp{x}{\xi}} e^{-t\Phi(\xi)}\, d\xi
     \leq  \left(2\pi\right)^{-d} \int e^{-t\Re(\Phi(\xi))}\, d\xi \\
    &   =  & \left(2\pi\right)^{-d} \left(\int_{|\xi|\leq (1/h(t))} e^{-t\Re(\Phi(\xi))}\, d\xi 
              + \int_{|\xi| > (1/h(t))} e^{-t\Re(\Phi(\xi))}\, d\xi \right)\\
    & \leq & \left(2\pi\right)^{-d} \left( c_1 \left(h(t)\right)^{-d} 
             + h(t) \int e^{-t\Re(\Phi(\xi))}|\xi| \, d\xi \right) \\
    & \leq & c_2 \left(h(t)\right)^{-d},
  \end{eqnarray*}
  for $t\in T$. Here we use (\ref{eq:Fourier_int}) in the last inequality above.
\end{proof}

\begin{lemat}\label{lm:est_below_0}
If $\nu(\Rd)=\infty$ and (\ref{eq:Fourier_int}) holds and $\nu$ is symmetric 
then there exist constants 
$C_{14},C_6$ such that
\begin{equation}\label{eq:est_by0_below}
  p_t(x+tb) \geq C_{14} \left(h(t)\right)^{-d},\quad t\in T,\, |x|\leq C_6 h(t).
\end{equation}
\end{lemat}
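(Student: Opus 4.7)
The plan is to use the symmetry of $\nu$ to rewrite $p_t(x+tb)$ as a real, non-oscillatory Fourier integral and then control its deviation from its value at $x=0$ via the mild decay supplied by \eqref{eq:Fourier_int}.

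First, I would observe that under the symmetry of $\nu$ we have $\int\sin(\scalp{\xi}{y})\,\nu(dy)=0$ and $\int_{B(0,1)}\scalp{\xi}{y}\,\nu(dy)=0$, hence
\begin{equation*}
  \Phi(\xi)=\Re\Phi(\xi)-i\scalp{\xi}{b},\qquad \Re\Phi(-\xi)=\Re\Phi(\xi).
\end{equation*}
Assumption \eqref{eq:Fourier_int} forces $e^{-t\Re\Phi}\in L^1(\Rd)$ for $t\in T$, so Fourier inversion applies and the imaginary part of the resulting integrand drops out, giving
\begin{equation*}
  p_t(x+tb)=(2\pi)^{-d}\int_{\Rd}\cos(\scalp{\xi}{x})\,e^{-t\Re\Phi(\xi)}\,d\xi,\qquad x\in\Rd,\ t\in T.
\end{equation*}
Specializing to $x=0$ and using $\Re\Phi\leq\Psi(|\cdot|)$, the radial computation that underlies Lemma \ref{lm:Oppos} (applied with $m=0$) yields
\begin{equation*}
  p_t(tb)\geq (2\pi)^{-d}\int e^{-t\Psi(|\xi|)}\,d\xi \geq (2\pi)^{-d}C_{11}\left(h(t)\right)^{-d}.
\end{equation*}

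Second, I would control the deviation from this maximum. By the elementary bound $1-\cos\theta\leq|\theta|$, valid for every $\theta\in\R$ (since $1-\cos\theta\leq\min(\theta^2/2,2)\leq|\theta|$), followed by $|\scalp{\xi}{x}|\leq|\xi||x|$ and assumption \eqref{eq:Fourier_int},
\begin{equation*}
  p_t(tb)-p_t(x+tb)=(2\pi)^{-d}\int(1-\cos(\scalp{\xi}{x}))e^{-t\Re\Phi(\xi)}\,d\xi\leq (2\pi)^{-d}M_3\,|x|\left(h(t)\right)^{-d-1}.
\end{equation*}
Setting $C_6:=C_{11}/(2M_3)$, this right-hand side is bounded by $\tfrac12(2\pi)^{-d}C_{11}(h(t))^{-d}$ whenever $|x|\leq C_6 h(t)$, and hence
\begin{equation*}
  p_t(x+tb)\geq \tfrac12(2\pi)^{-d}C_{11}\left(h(t)\right)^{-d}=:C_{14}\left(h(t)\right)^{-d},
\end{equation*}
which is exactly \eqref{eq:est_by0_below}.

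I do not anticipate a substantial obstacle: the method is the classical characteristic-function argument and both key inputs---the lower bound at $x=0$ and the $L^1$-control of $|\xi|e^{-t\Re\Phi(\xi)}$---have already been supplied by Lemma \ref{lm:Oppos} and assumption \eqref{eq:Fourier_int} respectively. The only step that deserves a word of care is the global nature of the bound $1-\cos\theta\leq|\theta|$; since \eqref{eq:Fourier_int} gives only one power of $|\xi|$, one cannot afford the sharper local estimate $1-\cos\theta\leq\theta^2/2$ and must invoke the weaker global one.
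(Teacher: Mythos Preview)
Your proof is correct and follows essentially the same two-step strategy as the paper: lower-bound $p_t(tb)$ via Lemma~\ref{lm:Oppos} using the symmetry of $\nu$, then control the deviation for small $|x|$ via assumption~\eqref{eq:Fourier_int}. The only difference is cosmetic---the paper bounds $|\nabla p_t|\leq c(h(t))^{-d-1}$ and applies the mean value inequality, whereas you estimate $p_t(tb)-p_t(x+tb)$ directly from the Fourier representation using $1-\cos\theta\leq|\theta|$---and both routes yield the same $|x|\,(h(t))^{-d-1}$ loss.
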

\begin{proof}
  It follows from Lemma \ref{lm:Oppos} and the symmetry of $\nu$ that
  \begin{eqnarray*}
    p_t(tb)
    &   =  & \left(2\pi\right)^{-d} \int e^{-t\Phi(\xi)}e^{-it\scalp{b}{\xi} }\, d\xi 
             = \left(2\pi\right)^{-d} \int e^{-t\Re(\Phi(\xi))}\, d\xi\\
    & \geq & c_1 \left(h(t)\right)^{-d}.
  \end{eqnarray*}
  For every $j\in\{1,...,d\}$ and $t\in T$, by (\ref{eq:Fourier_int}) we get
  \begin{eqnarray*}
  \left|\frac{\partial p_t}{\partial y_j}(y)\right|
  &   =  & \left|(2\pi)^{-d}\int_{\Rd}
           (-i)\xi_j e^{-i\scalp{y}{\xi}}
           e^{-t\Phi(\xi)}\, d\xi\right| \\
  & \leq & \left(2\pi\right)^{-d} \left(\int_{|\xi|\leq (1/h(t))} e^{-t\Re(\Phi(\xi))}|\xi|\, d\xi 
              + \int_{|\xi| > (1/h(t))} e^{-t\Re(\Phi(\xi))}|\xi|\, d\xi \right)\\
    & \leq & \left(2\pi\right)^{-d} \left( c_2 \left(h(t)\right)^{-d-1} 
             +  \int e^{-t\Re(\Phi(\xi))}|\xi| \, d\xi \right) \\
    & \leq & c_3 \left(h(t)\right)^{-d-1},\quad y\in\Rd.
\end{eqnarray*}
It follows that
$$
  p_t(x+tb) \geq c_1 \left(h(t)\right)^{-d} - dc_3 \left(h(t)\right)^{-d-1} |x| 
  \geq \frac{c_1}{2} \left(h(t)\right)^{-d} ,
$$
provided that $|x|\leq \frac{c_1}{2dc_3}h(t)$, which clearly yields (\ref{eq:est_by0_below}).
\end{proof}

For $r>0$ we denote $\tilde{\nu}_r(dy)=\indyk{B(0,r)}(y)\nu(dy)$. 
We consider the semigroup of measures $\{\tilde{P}^r_t,\; t\geq 0\}$ such that
$$
  \Fourier(\tilde{P}^r_t)(\xi) 
    =     \exp\left(t \int \left(e^{i\scalp{\xi}{y}}-1-i\scalp{\xi}{y}\right)
            \tilde{\nu}_r(dy)\right), \quad \xi\in\Rd.\,
$$
We have
\begin{eqnarray}\label{eq:FTildePEstimate}
  |\Fourier(\tilde{P}^r_t)(\xi)| 
  &   =  & \exp\left(-t\int_{|y|<r}
           (1-\cos(\scalp{y}{\xi}))\,
           \nu(dy)\right) \nonumber \\
  &   =  & \exp\left(-t\left(\Re(\Phi(\xi))-\int_{|y|\geq r}
           (1-\cos(\scalp{y}{\xi}))\,
           \nu(dy)\right)\right) \nonumber \\
  & \leq & \exp(-t\Re(\Phi(\xi)))\exp(2t\nu(B(0,r)^c)),
           \quad \xi\in\Rd.
\end{eqnarray}
It follows that $\int |\Fourier(\tilde{P}^r_t)(\xi)|\cdot |\xi|\, d\xi \leq e^{2t\nu(B(0,r)^c)} 
\int e^{-t\Re\left(\Phi(\xi)\right)}|\xi|\, d\xi$, hence if (\ref{eq:Fourier_int}) holds then for every $r>0$ and $t\in T$ the measure $\tilde{P}^r_t$ is absolutely continuous with respect to
the Lebesgue measure with density, say, $\tilde{p}^r_t\in C^1_b(\Rd)$ (see, e.g., Proposition 2.5 in \cite{Sato}).

We will often use $\tilde{P}^r_t$ and $\tilde{p}^r_t$ with $r=h(t)$ and
for simplification we will denote 
$$ 
  \tilde{P}_t=\tilde{P}^{h(t)}_t\,\, \mbox{and}\,\, \tilde{p}_t=\tilde{p}^{h(t)}_t.
$$

We note also that there exists a constant $M_0$ such that
\begin{equation}\label{eq:Lm<Psi}
  \nu(B(0,r)^c)\leq M_0\, \Psi(1/r),\quad r>0,
\end{equation} 
which follows from Proposition \ref{prop:1} (see also \cite{SchSW12}, the proof of Proposition 2.2, Step 3).

Using (\ref{eq:FTildePEstimate}) and (\ref{eq:Lm<Psi}) we obtain
\begin{eqnarray}\label{eq:FTildePhest}
  |\Fourier(\tilde{P}_t)(\xi)| 
  & \leq & \exp(-t\Re(\Phi(\xi)))\exp(2t\nu(B(0,h(t))^c)) \nonumber \\
  & \leq & \exp(-t\Re(\Phi(\xi)))\exp(2tM_0\Psi(1/h(t))) \nonumber \\
  &  =   & \exp(-t\Re(\Phi(\xi)))\exp(2M_0), \quad \xi\in\Rd, t\in T,
\end{eqnarray}
since $\Psi(1/h(t))=1/t$.

The L\'evy measures with bounded support are discussed, e.g., in Section 26 of \cite{Sato}, where estimates
of tails of corresponding distributions are included. We extended these results in
\cite{S11} to estimates of densities and in the following lemma we use the results of \cite{S11} in our new more general context.
\begin{lemat}\label{lm:small_jumps_est}
  If $\nu(\Rd)=\infty$ and (\ref{eq:Fourier_int}) holds then there exist constant $C_{15},C_{16}$ and $C_{17}$ such that
  \begin{equation}\label{eq:small_jumps_est}
    \tilde{p}_t(x)\leq C_{15} \left[h(t) \right]^{-d} 
    \exp\left[  \frac{-C_{16}|x|}{h(t)}\log\left(1+\frac{C_{17}|x|}{h(t)}\right)\right],\quad x\in\Rd, t\in T.
  \end{equation}
\end{lemat}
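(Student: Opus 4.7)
The plan is to estimate $\tilde p_t(x)$ by the classical Cramér--Bernstein method of complex contour shift in Fourier inversion, exploiting the fact that $\tilde\nu_{h(t)}$ has bounded support so that $\xi\mapsto\Fourier(\tilde P_t)(\xi)$ extends to an entire function on $\mathbb{C}^d$. This is the approach of \cite[Section 26]{Sato} and \cite{S11}, which we adapt to the present more general setting via the key bound \eqref{eq:FTildePhest} and the relation $t\Psi(1/h(t))=1$.

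First I would dispose of the regime $|x|\leq h(t)$: by Fourier inversion, \eqref{eq:FTildePhest}, and the computation already carried out in the proof of Lemma \ref{lm:global_est_above} (using \eqref{eq:Fourier_int} and the splitting $|\xi|\lessgtr 1/h(t)$), one has $\tilde p_t(x)\leq c (h(t))^{-d}$, which dominates the right-hand side of \eqref{eq:small_jumps_est} in this range. For $|x|>h(t)$, set $\eta = -\lambda\, x/|x|$ with $\lambda>0$ to be optimized and, using the entirety of $\Fourier(\tilde P_t)$ together with its integrability along lines parallel to $\Rd$, shift the contour of integration:
\begin{equation*}
  \tilde p_t(x)
  = (2\pi)^{-d} e^{-\lambda|x|} \int_{\Rd} e^{-i\scalp{x}{\xi}}\Fourier(\tilde P_t)(\xi + i\eta)\,d\xi.
\end{equation*}

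The task then reduces to a uniform bound on $|\Fourier(\tilde P_t)(\xi+i\eta)|$. Taking the real part of the exponent gives
\begin{equation*}
  \log|\Fourier(\tilde P_t)(\xi+i\eta)|
  = t\!\int_{|y|<h(t)}\!\bigl(e^{-\scalp{\eta}{y}}\cos\scalp{\xi}{y} - 1 + \scalp{\eta}{y}\bigr)\nu(dy).
\end{equation*}
I would split this as $(\cos\scalp{\xi}{y}-1) + (e^{-\scalp{\eta}{y}}-1+\scalp{\eta}{y})\cos\scalp{\xi}{y} + (e^{-\scalp{\eta}{y}}-1)(\cos\scalp{\xi}{y}-1)$ and use: (i) the first piece gives $-t\int_{|y|<h(t)}(1-\cos\scalp{\xi}{y})\nu(dy) \leq -t\Re\Phi(\xi) + 2M_0$ by \eqref{eq:Lm<Psi}; (ii) on $|y|\leq h(t)$ one has $|\scalp{\eta}{y}|\leq \lambda h(t)$, so the elementary bound $(e^{z}-1-z)\leq z^2(e^{\lambda h(t)}-1-\lambda h(t))/(\lambda h(t))^2$ for $|z|\leq \lambda h(t)$ combined with $\int_{|y|<h(t)}|y|^2\nu(dy)\leq c\, h(t)^2 H(h(t))\leq c\, h(t)^2\Psi(1/h(t))=c h(t)^2/t$ from Proposition \ref{prop:1} controls the second piece by $c(e^{\lambda h(t)}-1-\lambda h(t))$; and (iii) the third piece is absorbed by a similar estimate. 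Integrating in $\xi$ with the help of \eqref{eq:Fourier_int} (exactly as in Lemma \ref{lm:global_est_above}) yields
\begin{equation*}
  \tilde p_t(x)\leq c\,(h(t))^{-d}\exp\bigl(-\lambda|x| + c'(e^{\lambda h(t)}-1-\lambda h(t))\bigr).
\end{equation*}

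Finally, I would optimize in $\lambda$ by setting $\lambda = (1/h(t))\log(1+c''|x|/h(t))$ with $c''$ chosen so that the exponent in the bound reduces to the required $-C_{16}(|x|/h(t))\log(1+C_{17}|x|/h(t))$. The main obstacle is the third step: obtaining the bound $|\Fourier(\tilde P_t)(\xi+i\eta)|\leq c\exp(-t\Re\Phi(\xi))\exp(c'(e^{\lambda h(t)}-1-\lambda h(t)))$ with constants independent of $\xi$, $\lambda$, and $t\in T$. The subtlety is that although $e^{-\scalp{\eta}{y}}-1+\scalp{\eta}{y}\geq 0$, the factor $\cos\scalp{\xi}{y}$ has mixed sign, so the splitting must be carried out carefully to retain the $-t\Re\Phi(\xi)$ contribution (crucial for the $\xi$-integrability against \eqref{eq:Fourier_int}) while keeping the $\lambda$-dependent penalty tight enough for the final optimization to produce the claimed logarithmic improvement over pure exponential decay.
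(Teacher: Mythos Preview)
Your identity for the integrand is off: the correct splitting is
\[
e^{-\scalp{\eta}{y}}\cos\scalp{\xi}{y}-1+\scalp{\eta}{y}
=(\cos\scalp{\xi}{y}-1)+(e^{-\scalp{\eta}{y}}-1+\scalp{\eta}{y})+(e^{-\scalp{\eta}{y}}-1)(\cos\scalp{\xi}{y}-1),
\]
so the middle term carries no cosine factor. More substantively, the obstacle you flag is not merely cosmetic. With the corrected decomposition the middle term is indeed controlled by $c\,\psi(\lambda h(t))$ via $\int_{|y|<h(t)}|y|^2\nu(dy)\leq c\,h(t)^2/t$, but the first and third terms together equal $e^{-\scalp{\eta}{y}}(\cos\scalp{\xi}{y}-1)$, and the best pointwise upper bound retaining the $\xi$-decay is $e^{-\scalp{\eta}{y}}(\cos\scalp{\xi}{y}-1)\leq e^{-\lambda h(t)}(\cos\scalp{\xi}{y}-1)$. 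This leaves you with $\int_{\Rd}\exp\bigl(-e^{-\lambda h(t)}t\,\Re\Phi(\xi)\bigr)\,d\xi$, i.e.\ \eqref{eq:Fourier_int} at the time $e^{-\lambda h(t)}t$, which need not lie in $T$ and whose bound need not be comparable to $(h(t))^{-d}$. Since you will take $\lambda h(t)=\log(1+c''|x|/h(t))\to\infty$, this damping can be arbitrarily weak, and your sketch does not explain how to close the $\xi$-integral uniformly in $\lambda$ and $t\in T$.

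The paper avoids this difficulty by reorganising rather than by a sharper Fourier estimate. It rescales $g_t(y)=[h(t)]^{d}\tilde p_t(h(t)y)$, so that the rescaled L\'evy measure $\lambda_t(A)=t\,\tilde\nu_{h(t)}(h(t)A)$ is supported in $B(0,1)$ with $\int|y|^2\lambda_t(dy)\leq c$ and vanishing outer first moment. Crucially, \eqref{eq:Fourier_int} together with \eqref{eq:FTildePhest} is used only once, to get the \emph{density-level} bounds $\|g_t\|_\infty\leq c$ and $\|\nabla g_t\|_\infty\leq c$ uniformly in $t\in T$; from there Lemma~2 of \cite{S11} yields $g_t(y)\leq c\,e^{-c|y|\log(1+c|y|)}$. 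That lemma works from the hypotheses ``support of the L\'evy measure in $B(0,1)$, bounded second moment, bounded density and gradient'' and does not need an integrability condition of the type \eqref{eq:Fourier_int}; in effect, the uniform gradient bound is what converts the (easy) one-dimensional exponential-Chebyshev tail estimate for the measure into a pointwise density estimate, sidestepping the $\xi$-integration you are struggling with. If you want a self-contained argument in your style, replace the contour-shifted Fourier integral by this two-step route: first bound $\tilde P_t(\{\scalp{\theta}{y}>R\})$ by $\exp(-\lambda R+c\,\psi(\lambda h(t)))$ and optimise in $\lambda$, then use $|\nabla\tilde p_t|\leq c\,h(t)^{-d-1}$ (which you already know from \eqref{eq:Fourier_int}) to upgrade the tail bound to \eqref{eq:small_jumps_est}.
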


\begin{proof}
  Let $g_t(y)=[h(t)]^d\tilde{p}_t\left(h(t)y\right)$. We consider the infinitely divisible distribution $\pi_t(dy)=g_t(y)\, dy$. We note that
\begin{eqnarray*}
  \Fourier(\pi_t)(\xi) 
  &  =   &  \exp\left(t 
            \int \left(e^{i\scalp{\xi(h(t))^{-1}}{y}}-1-
            i\scalp{\xi(h(t))^{-1}}{y}\indyk{B(0,h(t))}(y)\right)
            \tilde{\nu}_{h(t)}(dy)\right)\\
  &  =   & \exp\left( 
           \int \left(e^{i\scalp{\xi}{y}}-1-
           i\scalp{\xi}{y}\indyk{B(0,1)}(y)\right)
           \lambda_t (dy)\right),  \quad \xi\in\Rd,
\end{eqnarray*}
where $\lambda_t(A)=t\tilde{\nu}_{h(t)}(h(t)A)$ is the L\'evy measure of $\pi_t$.

From (\ref{eq:FTildePhest}) and (\ref{eq:Fourier_int})
for every $j\in\{1,\dots,d\}$ and $t\in T$ we obtain
\begin{eqnarray*}
  \left|\frac{\partial g_t}{\partial y_j}(y)\right|
  &   =  & [h(t)]^{d+1} \left|(2\pi)^{-d}\int_{\R}
           (-i)\xi_j e^{-i\scalp{h(t)y}{\xi}}
           \Fourier(\tilde{p}_t)(\xi)d\xi\right|  \\
  & \leq & [h(t)]^{d+1} (2\pi)^{-d} \int |\xi| 
           e^{2M_0} e^{-t\Re(\Phi(\xi))} \,d\xi  \\
  & \leq & c_1.
\end{eqnarray*}

Similarly we get 
\begin{eqnarray}\label{eq:gbounded}
  g_t(y)
  &   =  & [h(t)]^{d} (2\pi)^{-d}\int
           e^{-i\scalp{h(t)y}{\xi}}
           \Fourier(\tilde{p}_t)(\xi)\, d\xi  \nonumber \\
  & \leq & [h(t)]^{d} (2\pi)^{-d} \int e^{2M_0} e^{-t\Re(\Phi(\xi))} \,d\xi \nonumber \\
  &   =  & [h(t)]^{d} (2\pi)^{-d} e^{2M_0} \left[\int_{|\xi|\leq (1/h(t))}
           e^{-t\Re(\Phi(\xi))} \,d\xi + \int_{|\xi|> (1/h(t))} 
           e^{-t\Re(\Phi(\xi))} \,d\xi\right] \nonumber \\
  & \leq & [h(t)]^{d} (2\pi)^{-d} e^{2M_0} \left[ c_2 [h(t)]^{-d} + 
           h(t) \int_{|\xi|> (1/h(t))} |\xi| 
           e^{-t\Re(\Phi(\xi))} \,d\xi\right] \nonumber \\
  & \leq & c_3.
\end{eqnarray}
It follows from (2.16) in \cite{SchSW12} that
$$
  \int |y|^2\, \lambda_t(dy) 
  =   t \int (|y|/h(t))^2\,\tilde{\nu}_{h(t)}(dy) \leq    c_4.
$$
We have also
\begin{eqnarray*}
  \int_{|y|>1} y_j \lambda_t(dy) 
  &  =   &  t (h(t))^{-1}\int_{B(0,h(t))^c} y_j \tilde{\nu}_{h(t)}(dy) = 0. 
\end{eqnarray*}
It follows from Lemma 2 in \cite{S11} and (\ref{eq:gbounded}) that
$$
  g_t(y) \leq c_5 \exp \left( -c_6|y| \log\left( c_7|y|\right)\right) \leq c_8 \exp \left( -c_9|y| \log\left(1+ c_{10}|y|\right)\right),
$$
for $y\in\Rd$, and this yields
$$
  \tilde{p}_t(x) \leq c_8 (h(t))^{-d} \exp\left( -\frac{c_9|x|}{h(t)} \log\left(1+ \frac{c_{10}|x|}{h(t)}\right)\right),
$$
for $x\in\Rd$, $t\in T$.
\end{proof}

For $r>0$ we denote $\nubounded{r}(dy)= \indyk{B(0,r)^c}(y)\,\nu(dy)$ and consider 
the probability measures $\{\bar{P}^r_t,\; t\geq 0\}$ such that
\begin{equation}\label{eq:FPbar}
  \Fourier(\bar{P}^r_t)(\xi) =
  \exp\left(t \int (e^{i\scalp{\xi}{y}}-1)\,
  \nubounded{r}(dy)\right)\, ,
  \quad \xi\in\Rd\, .
\end{equation}
Note that
\begin{eqnarray}\label{eq:exp}
  \bar{P}^r_t
  &  =  & \exp(t(\nubounded{r}-|\nubounded{r}|\delta_0)) =  \sum_{n=0}^\infty \frac{t^n\left(\nubounded{r}-|\nubounded{r}|\delta_0)\right)^{n*}}{n!} \\
  &  =  & e^{-t|\nubounded{r}|} \sum_{n=0}^\infty \frac{t^n\nubounded{r}^{n*}}{n!}\,,\quad t\geq 0\, . \nonumber
\end{eqnarray}

\begin{lemat}
If $\nu$ is a L\'evy measure and $f:\:[0,\infty)\to (0,\infty]$ is nonincreasing 
function satisfying (\ref{eq:nu_estim}) and if for some $r>0$ we have
\begin{equation}\label{eq:pomocn}
  \int_{|y|>r} f\left(s\vee |y|-\frac{|y|}{2} \right) \,\nu(dy) 
  \leq 
  M_2 f(s) \Psi\left(\frac{1}{r} \right),\quad s>0,
\end{equation}
with a constant $M_2$, then
  \begin{equation}\label{eq:nu_n*_est}
    \nubounded{r}^{n*}(A) \leq C_{18}^n \left[\Psi(1/r)\right]^{n-1} 
    f\left(\delta(A)/2\right)\left[\diam(A)\right]^{\gamma},
  \end{equation}
for $n\in\N$ and $A\in\Borel$ such that $\delta(A)>0$, $\diam(A)<\infty$, with a constant $C_{18}:= \max\{M_0,M_1+M_2\}$.
\end{lemat}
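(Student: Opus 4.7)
The plan is to proceed by induction on $n$. The base case $n=1$ is immediate from (\ref{eq:nu_estim}) and the monotonicity of $f$: since $\delta(A)>0$, one has
$\nubounded{r}(A)\le\nu(A)\le M_1 f(\delta(A))[\diam(A)]^{\gamma}\le C_{18} f(\delta(A)/2)[\diam(A)]^{\gamma}$,
using $C_{18}\ge M_1$.

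For the inductive step I would use the convolution identity
\begin{equation*}
\nubounded{r}^{(n+1)*}(A)=\int_{|y|>r}\nubounded{r}^{n*}(A-y)\,\nu(dy)=I_1+I_2,
\end{equation*}
where $I_1$ integrates over $r<|y|\le\delta(A)/2$ and $I_2$ over $|y|>\max(r,\delta(A)/2)$. For $I_1$ (nonempty only if $r<\delta(A)/2$) the set $A-y$ satisfies $\delta(A-y)\ge\delta(A)-|y|>0$ and $\diam(A-y)=\diam(A)$, so the induction hypothesis applies. With $s=\delta(A)/2$ I observe that for $|y|\le s$ one has $s\vee|y|-|y|/2=(\delta(A)-|y|)/2\le\delta(A-y)/2$, so monotonicity gives $f(\delta(A-y)/2)\le f(s\vee|y|-|y|/2)$. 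Substituting the inductive bound and invoking (\ref{eq:pomocn}) then yields
\begin{equation*}
I_1\le M_2 C_{18}^n[\Psi(1/r)]^n f(\delta(A)/2)[\diam(A)]^{\gamma}.
\end{equation*}

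The main obstacle is $I_2$, because the induction hypothesis fails on $A-y$ whenever $\delta(A-y)=0$ (i.e.\ $y\in\overline{A}$). My plan to circumvent this is to swap the order of integration via Fubini, writing
\begin{equation*}
I_2=\int\nu\bigl((A-z)\cap B(0,\max(r,\delta(A)/2))^c\bigr)\,\nubounded{r}^{n*}(dz).
\end{equation*}
The intersected set has $\delta\ge\delta(A)/2$ and $\diam\le\diam(A)$, so (\ref{eq:nu_estim}) bounds the inner $\nu$-measure by $M_1 f(\delta(A)/2)[\diam(A)]^{\gamma}$. The outer integral is the total mass $|\nubounded{r}|^n=[\nu(B(0,r)^c)]^n$, which by (\ref{eq:Lm<Psi}) is at most $(M_0\Psi(1/r))^n$. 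Hence $I_2\le M_1 M_0^n[\Psi(1/r)]^n f(\delta(A)/2)[\diam(A)]^{\gamma}$.

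Combining the two pieces yields
\begin{equation*}
\nubounded{r}^{(n+1)*}(A)\le\bigl(M_2 C_{18}^n+M_1 M_0^n\bigr)[\Psi(1/r)]^n f(\delta(A)/2)[\diam(A)]^{\gamma},
\end{equation*}
and since both $M_0$ and $M_1+M_2$ are bounded by $C_{18}$, one has $M_2 C_{18}^n+M_1 M_0^n\le(M_1+M_2)C_{18}^n\le C_{18}^{n+1}$, closing the induction. The essential insight is that the "large jump'' regime $|y|>\delta(A)/2$ is not handled by invoking the inductive bound on $A-y$ (which may have trivial $\delta$), but rather by trading via Fubini the $f$-factor against the total mass of $\nubounded{r}^{n*}$.
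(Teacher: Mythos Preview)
Your proof is correct and reaches the same constant $C_{18}=\max\{M_0,M_1+M_2\}$, but the decomposition differs from the paper's. The paper writes $\nubounded{r}^{(n+1)*}(A)=\int \nubounded{r}(A-y)\,\nubounded{r}^{n*}(dy)$ and splits the \emph{set} $A-y$ according to the cone $D_y=\{z:|z|>\tfrac12|z+y|\}$; on $(A-y)\cap D_y$ it applies (\ref{eq:nu_estim}) and the total-mass bound (your $I_2$ analogue), and on $(A-y)\cap D_y^c$ it swaps the order of integration, applies the induction hypothesis to $\nubounded{r}^{n*}(V_z)$ with $\delta(V_z)\ge(\delta(A)\vee 2|z|)-|z|$, and then invokes (\ref{eq:pomocn}) (your $I_1$ analogue). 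Thus both arguments have the same two building blocks---one piece handled by (\ref{eq:nu_estim}) plus the total mass $|\nubounded{r}|^n\le(M_0\Psi(1/r))^n$, the other by the inductive bound plus (\ref{eq:pomocn})---but the paper's splitting criterion depends on both the jump $z$ and the landing point $z+y$, while yours is simply the threshold $|y|\lessgtr\delta(A)/2$. Your version is arguably more transparent; the paper's $D_y$-decomposition, on the other hand, makes the lower bound $\delta(V_z)\ge(\delta(A)\vee 2|z|)-|z|$ exactly match the argument of $f$ in (\ref{eq:pomocn}), so that the full strength of that hypothesis (for all $|z|$, not just $|z|\le\delta(A)/2$) is visibly used. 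Either route closes the induction with $M_1M_0^n+M_2C_{18}^n\le(M_1+M_2)C_{18}^n\le C_{18}^{n+1}$.
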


\begin{proof}
We use induction. For $n=1$ the lemma follows from (\ref{eq:nu_estim}). Let (\ref{eq:nu_n*_est}) hold for some $n\in\N$ and constant $c_0=C_{18}$ and $A$ be a set such that $\delta(A)>0$. For $y\in\Rd$ we denote $D_y=\{z\in\Rd:\: |z| > \frac{1}{2}|z+y|\}=
\left(\overline{B\left(\frac{1}{3}y,\frac{2}{3}|y|\right)}\right)^c$. We have
\begin{eqnarray*}
  \nubounded{r}^{(n+1)*}(A) 
  &  =  & \int \nubounded{r}(A-y)\, \nubounded{r}^{n*}(dy) \\
  &  =  & \int \nubounded{r}\left((A-y)\cap D_y\right)\, \nubounded{r}^{n*}(dy) + \int\nubounded{r}\left((A-y)\cap D_y^c\right)\, \nubounded{r}^{n*}(dy) \\
  &  =  & I + II.
\end{eqnarray*}

We note that for $z\in (A-y)\cap D_y$ we have $z+y\in A$ and $|z|>\frac{1}{2} |z+y|$, therefore $|z| >\frac{1}{2} \delta(A)$ and 
$\delta((A-y)\cap D_y)>\frac{1}{2} \delta(A)$. Furthermore, $\diam((A-y)\cap D_y) \leq  \diam(A)$ and using (\ref{eq:nu_estim}) and (\ref{eq:Lm<Psi}) 
we obtain
\begin{eqnarray*}
  I
  & \leq & M_1 f\left(\delta(A)/2\right) \left(\diam(A)\right)^\gamma |\nubounded{r}^{n*}| \\
  & \leq & M_1 M_0^n \left(\Psi(1/r)\right)^n f\left(\delta(A)/2\right)
           \left(\diam(A)\right)^\gamma.
\end{eqnarray*}

We have
\begin{eqnarray*}
  II
  &  =   & \int\int \indyk{A-y}(z) \indyk{D_y^c}(z)\, \nubounded{r}(dz)\nubounded{r}^{n*}(dy) \\
  &  =   & \int\int \indyk{A-z}(y)\indyk{B(-z,2|z|)^c}(y) \, \nubounded{r}^{n*}(dy)\nubounded{r}(dz)\\
  &  =   & \int \nubounded{r}^{n*}\left((A-z)\cap B(-z,2|z|)^c\right)\,\nubounded{r}(dz),
\end{eqnarray*}

Let $y\in V_z :=(A-z)\cap B(-z,2|z|)^c$. We then have $y+z\in A$, so $|y+z|\geq \delta(A)$,
and $|y+z|\geq 2|z|$. Furthermore $|y|\geq |y+z|-|z|$ and this yields 
$$
  \delta(V_z) \geq \inf_{y\in V_z} |y+z| - |z| \geq \left(\delta(A)\vee 2|z|\right)-|z| \geq \frac{1}{2}\delta(A),
$$
and by (\ref{eq:pomocn}) and the induction hypothesis we get
\begin{eqnarray*}
  II
  & \leq & c_0^n \left(\Psi(1/r)\right)^{n-1} \left(\diam(A)\right)^\gamma \int f\left(\frac{ \left(\delta(A)\vee 2|z|\right)-|z|}{2}\right) \, \nubounded{r}(dz) \\
  & \leq & c_0^n \left(\Psi(1/r)\right)^{n-1} \left(\diam(A)\right)^\gamma M_2 
           f\left(\delta(A)/2 \right) \Psi(1/r) \\
  & =    & M_2 c_0^n \left(\Psi(1/r)\right)^{n} 
           f\left(\delta(A)/2 \right) \left(\diam(A)\right)^\gamma .
\end{eqnarray*}
Indeed, we see that the lemma follows by taking $c_0 := \max\{M_0,M_1+M_2\}$.
\end{proof}

\begin{wniosek}\label{cor:nub_est}
  If (\ref{eq:nu_estim}) and (\ref{eq:tech_assumpt}) hold then 
  $$
    \nubounded{r}^{n*}(B(x,\rho)) \leq 
    C_{18}^n \left[\Psi(1/r)\right]^{n-1} f\left(|x|/4\right) (2\rho)^{\gamma},
  $$
  for every $x\in\Rd\setminus\{0 \}$, $\rho<|x|/2$ and $r>0$, $n\in\N$.
\end{wniosek}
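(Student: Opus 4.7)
The plan is to apply the preceding lemma directly to the set $A = B(x,\rho)$, using the monotonicity of $f$ to replace $\delta(A)/2$ by $|x|/4$.

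First, I would verify that the hypotheses of the lemma are satisfied for every $r > 0$: assumption (\ref{eq:nu_estim}) is exactly (\ref{eq:nu_estim}), and (\ref{eq:pomocn}) is just the special case of (\ref{eq:tech_assumpt}) for the fixed $r$. Therefore, for every $r > 0$ and every Borel set $A$ with $\delta(A) > 0$ and $\diam(A) < \infty$, the lemma yields
\begin{equation*}
  \nubounded{r}^{n*}(A) \leq C_{18}^n \left[\Psi(1/r)\right]^{n-1} f\!\left(\delta(A)/2\right) [\diam(A)]^{\gamma}.
\end{equation*}

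Now I would specialize to $A = B(x,\rho)$ with $x \neq 0$ and $\rho < |x|/2$. Here $\diam(A) = 2\rho$, and since $\rho < |x|/2$ we have $\delta(A) = |x| - \rho > |x|/2$, whence $\delta(A)/2 > |x|/4$. Because $f$ is nonincreasing, this gives $f(\delta(A)/2) \leq f(|x|/4)$, and plugging these into the lemma's bound yields
\begin{equation*}
  \nubounded{r}^{n*}(B(x,\rho)) \leq C_{18}^n \left[\Psi(1/r)\right]^{n-1} f(|x|/4)\, (2\rho)^{\gamma},
\end{equation*}
as claimed. There is essentially no obstacle here; the corollary is just a clean repackaging of the lemma with the geometric quantities $\delta$ and $\diam$ of a ball expressed in terms of $|x|$ and $\rho$.
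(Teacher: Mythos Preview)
Your proof is correct and is exactly the intended argument: the corollary is stated without proof in the paper because it is an immediate application of the preceding lemma to $A=B(x,\rho)$, using $\diam(A)=2\rho$, $\delta(A)=|x|-\rho>|x|/2$, and the monotonicity of $f$.
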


\begin{proof}[Proof of Theorem $\ref{th:main}$]
We have
\begin{displaymath}
  P_t=\tilde{P}^r_t \ast \bar{P}^r_t \ast \delta_{t b_r}\,,\quad t\geq 0,
\end{displaymath}
where $\bar{P}^r_t$ is defined by (\ref{eq:FPbar}) and $b_r$ by (\ref{eq:def_br}).
Of course
$$
  p_t= \tilde{p}^r_t * \bar{P}^r_t\ast \delta_{t b_r} \,, \quad t\in T.
$$

We will denote 
$$
  \bar{P}_t=\bar{P}_t^{h(t)}.
$$

We have $\Psi(1/h(t))=1/t$ and it follows from Corollary \ref{cor:nub_est} and (\ref{eq:exp}) that
\begin{equation}\label{eq:bar_P_schonwieder}
  \bar{P}_t(B(x,\rho)) \leq c_1 t f\left(|x|/4\right) \rho^\gamma,
\end{equation}
for $\rho\leq \frac{1}{2}|x|$ and $t>0$.

We denote
$$
  g(s)=e^{-C_{16}s\log(1+C_{17}s)},\quad s\geq 0,
$$
where constants $C_{16},C_{17}$ are given by (\ref{eq:small_jumps_est}). We note that $g$ is decreasing, continuous on $[0,\infty)$ and $g(s)\leq c_2 s^{-2\gamma}$, for some
$c_2>0$, which yields that the inverse function $g^{-1}:\: (0,1]\to [0,\infty)$ exists, is decreasing, and $g^{-1}(s)\leq \left(c_2/s\right)^{1/(2\gamma)}$.
In particular
$$
  \int_0^1 \left(g^{-1}(s)\right)^{\gamma}\,ds < \infty.
$$
Using Lemma \ref{lm:small_jumps_est} and (\ref{eq:bar_P_schonwieder}) we obtain
\begin{eqnarray*}
  \tilde{p}_t * \bar{P}_t (x)
  &  =   & \int \tilde{p}_t(x-y)\, \bar{P}_t(dy) \\
  & \leq & \int C_{16} [h(t)]^{-d} g(|x-y|/h(t)) \, \bar{P}_t(dy) \\
  &   =  & C_{16} [h(t)]^{-d} \int \int_0^{g(|x-y|/h(t))} \, ds\, \bar{P}_t(dy) \\
  &   =  & C_{16} [h(t)]^{-d} \int_0^1 \int \indyk{\{y\in\Rd:\: g(|x-y|/h(t)) >s\}} \, \bar{P}_t(dy) ds \\
  &   =  & C_{16} [h(t)]^{-d} \int_0^1 \bar{P}_t\left(B(x,h(t)g^{-1}(s))\right) ds \\
  & \leq & c_1C_{16} [h(t)]^{-d} \left(\int_{g(\frac{|x|}{2h(t)})}^1 t f\left(|x|/4\right) 
           \left(h(t)g^{-1}(s)\right)^\gamma\, ds + \int_0^{g(\frac{|x|}{2h(t)})}\, ds\right) \\
  & \leq & c_1C_{16} [h(t)]^{-d} \left( t[h(t)]^{\gamma} f\left(|x|/4\right) \int_0^1  
           \left(g^{-1}(s)\right)^\gamma\, ds + g\left(\frac{|x|}{2h(t)}\right) \right) \\
  &  =   & c_3 [h(t)]^{-d} \left( t[h(t)]^{\gamma} f\left(|x|/4\right) 
           + g\left(\frac{|x|}{2h(t)}\right) \right). \\
\end{eqnarray*}

This and Lemma \ref{lm:global_est_above} yield
\begin{eqnarray*}
  p_t(x+t b_{h(t)}) 
  &  =   & \int \tilde{p}_t * \bar{P}_t (x+t b_{h(t)}-y) \delta_{t b_{h(t)}}(dy) \\
  &  =   & \tilde{p}_t * \bar{P}_t (x) \\
  & \leq & c_4 [h(t)]^{-d} \min\left\{1, t[h(t)]^{\gamma}
          f\left(|x|/4\right) + \,g\left(\frac{|x|}{2h(t)}\right) \right\},
\end{eqnarray*}
for $t\in T$.
\end{proof}

The following Lemma which will be used in the proof of Theorem \ref{th:p_est_below} 
was communicated to us by Tomasz Grzywny.

\begin{lemat}\label{lm:OdTomkaG} If $\nu(\Rd)=\infty$ then we have
$$
  \lim_{a\to 0^+}\sup_{t>0} \frac{h(at)}{h(t)} =0.
$$
\end{lemat}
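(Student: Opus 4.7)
The plan is to reduce the claim to a polynomial ``doubling'' estimate for $\Psi$. Setting $s=1/t$, the ratio becomes $h(at)/h(t)=\Psi^{-1}(s)/\Psi^{-1}(s/a)$, so it suffices to show that for every $\epsilon\in(0,1)$ there exists $a_0>0$ with $\Psi^{-1}(s/a)>\Psi^{-1}(s)/\epsilon$ for all $s>0$ and all $a<a_0$. Since $\Psi$ is continuous and nondecreasing with $\Psi(\Psi^{-1}(y))=y$, the implication ``$\Psi(r)<y\Rightarrow\Psi^{-1}(y)>r$'' is automatic (otherwise $y=\Psi(\Psi^{-1}(y))\leq\Psi(r)<y$), so it is enough to prove $\Psi(\Psi^{-1}(s)/\epsilon)<s/a$ for $a$ small.

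The key ingredient is the weak doubling bound $\Psi(Kr)\leq C_0 K^2\Psi(r)$ for all $K\geq 1$ and $r>0$. By Proposition \ref{prop:1} this reduces to
\[
  H(\sigma)\leq (r/\sigma)^2 H(r),\qquad 0<\sigma\leq r,
\]
which I would obtain from the elementary pointwise inequality
\[
  \min(1,u^2)\;\geq\;(\sigma/r)^2\min(1,(r/\sigma)^2 u^2),\qquad u\geq 0,
\]
verified by splitting into the three ranges $u\leq\sigma/r$, $\sigma/r<u\leq 1$, and $u>1$, and then integrated against $\nu$ with $u=|y|/r$.

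Armed with the doubling, set $r_0:=\Psi^{-1}(s)$, so that $\Psi(r_0)=s$. Applying the bound with $K=1/\epsilon$ gives $\Psi(r_0/\epsilon)\leq C_0\epsilon^{-2}s$. Thus $a_0:=\epsilon^2/C_0$ works: whenever $a<a_0$ we have $\Psi(r_0/\epsilon)<s/a$, and hence $\Psi^{-1}(s/a)>r_0/\epsilon$, uniformly in $s>0$. Substituting $s=1/t$ and taking the supremum gives $\sup_{t>0}h(at)/h(t)\leq\epsilon$, and sending $\epsilon\to 0^+$ finishes the proof.

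The only genuine obstacle is the doubling estimate for $\Psi$; everything else is an unwinding of the definitions of $h$ and $\Psi^{-1}$. I expect the main care to be needed in verifying the pointwise bound for $H$ and in tracking the constant from Proposition \ref{prop:1} through to a clean value of $C_0$, but both are routine. The strict-inequality bookkeeping for $\Psi^{-1}$ is the one place where one must not be sloppy, and it is handled by the contrapositive argument above.
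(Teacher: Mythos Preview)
Your argument is correct and follows essentially the same route as the paper: both proofs rest on the quadratic scaling $H(\sigma)\leq (r/\sigma)^2 H(r)$ for $\sigma\leq r$ combined with the two-sided comparison $\Psi(r)\approx H(1/r)$ from Proposition~\ref{prop:1}, and both arrive at the explicit bound $\sup_{t>0}h(at)/h(t)\leq c\sqrt{a}$. The only organisational difference is that the paper inverts $H$ directly and sandwiches $\Psi^{-1}$ between two $H^{-1}$ expressions, whereas you first transfer the scaling to $\Psi$ as $\Psi(Kr)\leq C_0K^2\Psi(r)$ and then argue with $\Psi^{-1}$; your extra care with the strict inequality for the supremum-defined inverse is a minor refinement but not a different idea.
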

\begin{proof}
   Since $\nu(\Rd)=\infty$, the function $H(r)=\int (1\wedge (|y|^2/r^2))\,\nu(dy)$ is strictly decreasing and $H(0,\infty)=(0,\infty)$.
   Moreover, we have $H(\lambda r)\geq \lambda^{-2} H(r)$, for $r>0$, $\lambda>1$, hence
   \begin{equation}\label{eq:H-1}
     \lambda H^{-1}(s) \leq H^{-1}(\lambda^{-2} s),\quad s>0,\, \lambda>1.
   \end{equation}
   It follows from Proposition \ref{prop:1} that 
   $$
     C_8 H(1/r) \leq \Psi(r)\leq 2 H(1/r),\quad r>0,
   $$
   which yields
   \begin{equation}\label{eq:PsiH-1}
     \frac{1}{H^{-1}(s/2)} \leq \Psi^{-1}(s) \leq \frac{1}{H^{-1}(s/C_8)}, \quad s>0.
   \end{equation}
   Using (\ref{eq:PsiH-1}) and (\ref{eq:H-1}) we obtain
   $$
     \frac{h(at)}{h(t)} = \frac{\Psi^{-1}(1/t)}{\Psi^{-1}(1/at)}\leq \frac{H^{-1}(1/(2at))}{H^{-1}(1/(C_8t))}\leq 
     \sqrt{\frac{2a}{C_8}},
   $$
   for $a< C_8/2$, and the lemma follows.
\end{proof}

\begin{proof}[Proof of Theorem $\ref{th:p_est_below}$]
First we will prove that
there exist constants $c_1$, $c_2$, $c_3$ such that for every $a\in (0,1]$ we have
\begin{equation}\label{eq:TildaBelow}
  \tilde{p}^{h(at)}_t(y) \geq c_1 \left(h(t)\right)^{-d},  
\end{equation}
provided $|y|\leq c_2 e^{-c_3/a} h(t)$, $t\in T$.  

By symmetry of $\nu$ we have
$$
  \Fourier(\tilde{p}^{h(at)}_t)(\xi)
   \geq  |\Fourier(p_t)(\xi)|,\quad \xi\in\Rd,\, t\in T,
$$ and this and Lemma \ref{lm:Oppos} yield
\begin{eqnarray*}
  \tilde{p}^{h(at)}_t(0)
         & \geq & (2\pi)^{-d} \int e^{-t\Re(\Phi(\xi))}\, d\xi \\
         & \geq & c_4 \left(h(t)\right)^{-d}, \quad t\in T.
\end{eqnarray*}
By (\ref{eq:FTildePEstimate}) and (\ref{eq:Lm<Psi}) we have
$$
  |\Fourier(\tilde{p}^{h(at)}_t)(\xi)| \leq |\Fourier(p_t)(\xi)|e^{2t\nu(B(0,h(at))^c}
  \leq e^{-t\Re(\Phi(\xi))} e^{2M_0t\Psi(1/(h(at)))} = e^{-t\Re(\Phi(\xi))} e^{2M_0/a},
$$
and for every $j\in\{1,\dots,d\}$ by (\ref{eq:Fourier_int}) we get	
\begin{eqnarray*}
  \left|\frac{\partial \tilde{p}^{h(at)}_t }{\partial y_j}(y)\right|
  &   =  & \left|(2\pi)^{-d}\int
           (-i)\xi_j e^{-i\scalp{y}{\xi}}
           \Fourier(\tilde{p}^{h(at)}_t)(\xi)d\xi\right| \\
  & \leq & c_5 e^{2M_0/a} \left(\int_{|\xi|\leq (1/h(t))} e^{-t\Re(\Phi(\xi))}|\xi|\, d\xi 
              + \int_{|\xi|  > (1/h(t))} e^{-t\Re(\Phi(\xi))}|\xi|\, d\xi \right)\\
    & \leq & c_5 e^{2M_0/a} \left( c_6 \left(h(t)\right)^{-d-1} 
             +  \int e^{-t\Re(\Phi(\xi))}|\xi| \, d\xi \right) \\
    & \leq & c_7 e^{2M_0/a} \left(h(t)\right)^{-d-1}.
\end{eqnarray*}
It follows that
$$
  \tilde{p}^{h(at)}_t(y)\geq c_4\left(h(t)\right)^{-d}-dc_7 e^{2M_0/a} \left(h(t)\right)^{-d-1} |y|
  \geq \frac{1}{2}c_4 \left(h(t)\right)^{-d},
$$
provided  $|y|\leq \frac{c_4}{2dc_7}e^{-2M_0/a} h(t)$,
which clearly yields (\ref{eq:TildaBelow}).
  
Let $a\in(0,1)$ and $t\in T$.
For $r>0$, $|x|>r+h(at)$ by (\ref{eq:exp}) and (\ref{eq:Lm<Psi}) we get
$$
  \bar{P}^{h(at)}_t(B(x,r)) \geq e^{-M_0/a} t \bar{\nu}_{h(at)}(B(x,r)) = e^{-M_0/a} t \nu (B(x,r)).
$$
This, (\ref{eq:TildaBelow}) and (\ref{eq:nu_est_below}) for $x\in A$ yield 
\begin{eqnarray*}
  p_t(x+tb) 
  &  =    & \tilde{p}^{h(at)}_t * \bar{P}_t^{h(at)}(x) \\
  &  =   & \int \tilde{p}_t^{h(at)}(x-z)\bar{P}_t^{h(at)}(dz) \\
  & \geq & c_1 \int_{|z-x|<c_2e^{-c_3/a }h(t)} 
             \left(h(t)\right)^{-d} \bar{P}_t^{h(at)}(dz) \\
  &  =   & c_1 \left(h(t)\right)^{-d} \bar{P}_t^{h(at)}(B(x,c_2e^{-c_3/a}h(t))) \\
  & \geq & c_8 t\left(h(t)\right)^{-d+\gamma} f(|x|+c_2e^{-c_3/a}h(t) ), 
\end{eqnarray*}
for a constant $c_8=c_8(a)$, provided $|x|>h(at)+c_2e^{-c_3/a}h(t)$.
By Lemma \ref{lm:est_below_0} we have 
$p_t(x+tb)\geq C_{14} \left(h(t)\right)^{-d}$ for $|x|<C_6 h(t)$. Using Lemma \ref{lm:OdTomkaG} we choose $a\in(0,1)$ such that
$h(at)/h(t)+c_2e^{-c_3/a}\leq C_6$ and we obtain (\ref{eq:p_est_below2}) and (\ref{eq:p_est_below1}) follows from
Lemma \ref{lm:est_below_0}.
\end{proof}

\begin{lemat}\label{lm:derest}
  If $\nu(\Rd)=\infty$ and (\ref{eq:Fourier_derivatives}) holds for
  some $m\in\N_0$, then $\tilde{p}_t\in C^m_b(\Rd)$ 
  and for every $n\in\N_0$ such that $m\geq n$ and every $\beta\in\N^d_0$ 
  such that $|\beta|\leq m-n$ there exists a constant $C_{19}=C_{19}(m,n)$ such that
  $$
    |\partial^\beta_y \tilde{p}_t(y)| \leq 
    C_{19} \left[h(t)\right]^{-d-|\beta|}\left(1+|y|/h(t)\right)^{-n},
    \quad y\in\Rd,t\in T.
  $$
\end{lemat}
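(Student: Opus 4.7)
The plan is to combine Fourier inversion with repeated integration by parts in $\xi$. Set $r:=h(t)$ and $\hat f(\xi):=\Fourier(\tilde P_t)(\xi)=e^{-t\tilde\Phi_r(\xi)}$, where $\tilde\Phi_r$ denotes the symbol corresponding to $\tilde\nu_r$. I would first observe that combining $|\hat f(\xi)|\leq e^{2M_0}e^{-t\Re\Phi(\xi)}$ from (\ref{eq:FTildePhest}) with the hypothesis, and splitting the integral at $|\xi|=1/r$ with $|\xi|^{m'-m}\leq r^{m-m'}$ on $|\xi|>1/r$, delivers the interpolated bound
$$\int_{\Rd}|\xi|^{m'}|\hat f(\xi)|\,d\xi \leq C\, r^{-d-m'},\qquad 0\leq m'\leq m,\ t\in T.$$
This yields $\tilde p_t\in C^m_b(\Rd)$ by differentiation under the integral sign, together with the representation $\partial^\beta_y \tilde p_t(y)=(2\pi)^{-d}\int (-i\xi)^\beta e^{-i\langle y,\xi\rangle}\hat f(\xi)\,d\xi$.

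To extract the polynomial decay in $|y|$, I would fix a coordinate $j$, use the identity $y_j^n e^{-i\langle y,\xi\rangle}=i^n \partial_{\xi_j}^n e^{-i\langle y,\xi\rangle}$, and integrate by parts $n$ times. Boundary terms vanish because (\ref{eq:Fourier_derivatives}) for $m\geq 1$ implies (\ref{eq:Fourier_int}), whence Lemma \ref{lm:small_jumps_est} supplies super-polynomial decay of $\tilde p_t$, forcing all $\xi$-derivatives of $\hat f$ to vanish at infinity. Leibniz then reduces the task to estimating
$$\partial_{\xi_j}^n\bigl[\xi^\beta\hat f(\xi)\bigr] = \sum_{k=0}^{n\wedge\beta_j}\binom{n}{k}\bigl(\partial_{\xi_j}^k\xi^\beta\bigr)\,\partial_{\xi_j}^{n-k}\hat f(\xi),$$
in which the polynomial factor has degree $|\beta|-k$ in $\xi$.

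For $\partial_{\xi_j}^l \hat f$ I would apply Faà di Bruno to $\hat f=e^{-t\tilde\Phi_r}$. Differentiating under the integral defining $\tilde\Phi_r$ gives $|\partial_{\xi_j}^l\tilde\Phi_r(\xi)|\leq \int_{|y|<r}|y|^l\nu(dy)$ for $l\geq 2$ and $|\partial_{\xi_j}\tilde\Phi_r(\xi)|\leq |\xi|\int_{|y|<r}|y|^2\nu(dy)$ (the singleton case is where the extra $|\xi|$ enters). Coupled with the key moment bound $t\int_{|y|<r}|y|^2\nu(dy)\leq cr^2$ (appearing as (2.16) in \cite{SchSW12} and already used in the proof of Lemma \ref{lm:small_jumps_est}), this gives $|t\partial_{\xi_j}\tilde\Phi_r(\xi)|\leq c|\xi|r^2$ and $|t\partial_{\xi_j}^l\tilde\Phi_r(\xi)|\leq cr^l$ for $l\geq 2$. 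A partition of $\{1,\ldots,l\}$ with $s_1$ singleton blocks therefore contributes at most $c^s|\xi|^{s_1}r^{l+s_1}$, so summing over partitions produces
$$|\partial_{\xi_j}^{l}\hat f(\xi)|\leq C\,e^{2M_0-t\Re\Phi(\xi)}\,r^{l}(1+|\xi|r)^{l}.$$

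To finish, I would expand $(1+|\xi|r)^{n-k}=\sum_{\ell}\binom{n-k}{\ell}(|\xi|r)^{\ell}$ and plug back into the Leibniz expansion: every resulting monomial has the form $|\xi|^{|\beta|-k+\ell}r^{n-k+\ell}e^{2M_0-t\Re\Phi(\xi)}$ with $|\xi|$-exponent at most $|\beta|+n\leq m$, so the interpolated bound of the first paragraph applies and each $\xi$-integral is controlled by $C r^{n-d-|\beta|}$. This gives $|y_j|^n|\partial^\beta_y\tilde p_t(y)|\leq Cr^{n-d-|\beta|}$ for each coordinate $j$; summing over $j$ and combining with the $n=0$ case produces $(1+|y|/r)^n|\partial^\beta_y\tilde p_t(y)|\leq Cr^{-d-|\beta|}$ via $(1+a)^n\leq C_n(1+a^n)$, as required. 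The principal bookkeeping obstacle is the Faà di Bruno accounting: I must verify that the singleton blocks are the sole source of $|\xi|$-powers and that the overall $|\xi|$-exponent $|\beta|-k+\ell$ never escapes the integrability regime $\leq m$ of (\ref{eq:Fourier_derivatives}), so that the interpolated bound remains available throughout.
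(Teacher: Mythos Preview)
Your argument is correct and self-contained, but it takes a different route from the paper. The paper's proof is much shorter: it rescales, setting $g_t(y)=[h(t)]^d\tilde p_t(h(t)y)$ so that the associated L\'evy measure $\lambda_t(A)=t\tilde\nu_{h(t)}(h(t)A)$ is supported in $B(0,1)$ with $\int|y|^n\lambda_t(dy)\leq c$ uniformly in $t$ (by (2.16) in \cite{SchSW12}), verifies $\int|\Fourier(\pi_t)(\xi)||\xi|^m\,d\xi\leq c$ via (\ref{eq:FTildePhest}) and (\ref{eq:Fourier_derivatives}), and then invokes \cite[Proposition~2.1]{SchSW12} as a black box to get $|\partial^\beta g_t(y)|\leq c(1+|y|)^{-n}$. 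What you have written is, in effect, a direct proof of that cited proposition specialised to the present situation: the Fa\`a di Bruno step together with the moment bound $t\int_{|y|<r}|y|^2\nu(dy)\leq cr^2$ is precisely the mechanism behind \cite[Proposition~2.1]{SchSW12}. Your approach buys self-containment and makes the dependence on $h(t)$ explicit without rescaling; the paper's approach buys brevity by outsourcing the Fourier bookkeeping.

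One small point: your justification that boundary terms vanish is slightly off. Super-polynomial decay of $\tilde p_t$ (from Lemma~\ref{lm:small_jumps_est}) gives, via Riemann--Lebesgue, that $\partial_{\xi_j}^l\hat f\to 0$ at infinity, but the actual boundary terms involve $\partial_{\xi_j}^l(\xi^\beta\hat f)$, and controlling those by this route would need decay of \emph{derivatives} of $\tilde p_t$, which is what you are proving. The clean fix is to bypass integration by parts altogether: your Fa\`a di Bruno estimate already shows $\partial_{\xi_j}^n[\xi^\beta\hat f]\in L^1(\Rd)$, so the identity $y_j^n\partial_y^\beta\tilde p_t=(2\pi)^{-d}i^{n}(-i)^{|\beta|}\int e^{-i\langle y,\xi\rangle}\partial_{\xi_j}^n[\xi^\beta\hat f]\,d\xi$ holds in the sense of tempered distributions and then pointwise, with no boundary terms to discuss.
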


\begin{proof}
  The existence of the density $\tilde{p}_t\in C^m_b(\Rd)$ is a consequence of
  (\ref{eq:FTildePhest}), (\ref{eq:Fourier_derivatives}) and 
  \cite[Proposition 28.1]{Sato}. Similarly like in the proof of Lemma 
  \ref{lm:small_jumps_est} we consider $g_t(y)=[h(t)]^d\tilde{p}_t\left(h(t)y\right)$
  and the infinitely divisible distribution $\pi_t(dy)=g_t(y)\, dy$. It follows from
  (2.16) in \cite{SchSW12} that there exists a constant $c_1$ such that
  $$
    \int |y|^{n} \lambda_t(dy) \leq c_1,\quad t\in T,
  $$
  for every $n\geq 2$, where $\lambda_t$ is the L\'evy measure of $\pi_t$.
  Moreover using (\ref{eq:Fourier_derivatives}) and (\ref{eq:FTildePhest}) we get   
  \begin{eqnarray*}
    \int |\Fourier(\pi_t)(\xi)| |\xi|^m\, d\xi 
    &   =  & \int \left|\Fourier(\tilde{P}_t)
             \left(\xi/h(t)\right)\right||\xi|^m\, d\xi \\
    &   =  & \left[h(t)\right]^{d+m}\int \left|\Fourier(\tilde{P}_t)
             (\xi)\right| |\xi|^m\, d\xi \\
    & \leq & \left[h(t)\right]^{d+m}\int e^{2M_0}
             \exp\left[-t\,\Re\left(\Phi
             (\xi)\right)\right] |\xi|^m\, d\xi \leq M_8 e^{2M_0}.
  \end{eqnarray*}
  Using \cite[Proposition 2.1]{SchSW12} we obtain
  $$
    |\partial_y^\beta g_t(y)| \leq c_2 (1+|y|)^{-n},\quad y\in\Rd,  
  $$
  for $|\beta|+n\leq m$, and $c_2=c_2(m,n)$, and the lemma follows.  
\end{proof}

\begin{proof}[Proof of Theorem $\ref{th:Derivatives}$]
  The existence of the density $p_t\in C_b^m(\Rd)$ is a consequence of (\ref{eq:Fourier_derivatives}) and \cite{Sato}, Proposition
28.1, or \cite{P96}, Proposition 0.2.
  Using (\ref{eq:Fourier_derivatives}) we obtain
  \begin{eqnarray}\label{eq:der_global}
    \left|\partial^\beta_x p_t (x)\right|
    &   =  & \left|(2\pi)^{-d}\int
             (-i)^{|\beta|} \xi^\beta e^{-i\scalp{x}{\xi}}
             e^{-t\Phi(\xi)}\, d\xi\right| \nonumber \\
    & \leq & (2\pi)^{-d} \left(\int_{|\xi|\leq (1/h(t))} e^{-t\Re(\Phi(\xi))}|\xi|^{|\beta|}\, d\xi 
              + \int_{|\xi| > (1/h(t))} e^{-t\Re(\Phi(\xi))}|\xi|^{|\beta|}\, d\xi \right)
              \nonumber \\  
    & \leq & (2\pi)^{-d} \left(\int_{|\xi|\leq (1/h(t))} |\xi|^{|\beta|}\, d\xi 
              + \left[h(t)\right]^{m-|\beta|} \int_{|\xi| > (1/h(t))} 
              e^{-t\Re(\Phi(\xi))}|\xi|^m \, d\xi \right) \nonumber \\
    & \leq & c_1 \left(h(t)\right)^{-d-|\beta|},
  \end{eqnarray}
  for $x\in\Rd$ and $t\in T$. It follows from Lemma \ref{lm:derest}, Corollary \ref{cor:nub_est} and (\ref{eq:bar_P_schonwieder}) that
  \begin{eqnarray*}
    \left|\partial_x^\beta\left(\tilde{p}_t \ast \bar{P}_t \right)(x)\right|
    &   =  & \left| \int \partial_x^\beta \tilde{p}_t(x-y) \bar{P}_t(dy) \right| \\
    & \leq & \int \left| \partial_x^\beta \tilde{p}_t(x-y) \right| \bar{P}_t(dy) \\
    & \leq & C_{19}\left[h(t)\right]^{-d-|\beta|} \int (1+|x-y|/h(t))^{-n} \bar{P}_t(dy) \\
    &   =  & C_{19}\left[h(t)\right]^{-d-|\beta|} \int \int_0^{(1+|x-y|/h(t))^{-n}} \,ds\, \bar{P}_t(dy)\\
    &   =  & C_{19} [h(t)]^{-d-|\beta|} \int_0^1 \int 
             \indyk{\{y\in\Rd:\: (1+|x-y|/h(t))^{-n}>s\}} \, \bar{P}_t(dy) ds \\
    &   =  & C_{19} [h(t)]^{-d-|\beta|} \int_0^1 \bar{P}_t\left(B(x,h(t)(s^{-\frac{1}{n}}-1))\right) ds \\
    & \leq & c_2 [h(t)]^{-d-|\beta|} \left(\int_{(1+\frac{|x|}{2h(t)})^{-n}}^1 t f\left(|x|/4\right) 
             \left(h(t)(s^{-\frac{1}{n}}-1)\right)^\gamma\, ds 
             + \int_0^{(1+\frac{|x|}{2h(t)})^{-n}}\, ds\right) \\
    & \leq & c_2 [h(t)]^{-d-|\beta|} \left( t[h(t)]^{\gamma} f\left(|x|/4\right) \int_0^1  
             s^{-\gamma/n}\, ds + \left(1+\frac{|x|}{2h(t)}\right)^{-n} \right) \\
    &  =   & c_3 [h(t)]^{-d-|\beta|} \left( t[h(t)]^{\gamma} f\left(|x|/4\right) 
             + \left(1+\frac{|x|}{2h(t)}\right)^{-n} \right),
  \end{eqnarray*}
  for $x\in\Rd$, $t\in T$, and this and (\ref{eq:der_global}) yield (\ref{eq:der_est}).
\end{proof}


\section{Examples}\label{Examples}

In what follows we assume that
$$
  \nu(A) \approx \int_\sfera \int_0^\infty \indyk{A}(s\theta) Q(s)\,  ds\mu(d\theta),
$$
for nondegenerate measure $\mu$ and a nonincreasing function $Q$. We assume also
that $\mu$ is a $\gamma-1$-measure on $\sfera$ for some $\gamma\in [1,d]$, i.e.
\begin{equation}\label{eq:mu_gamma_measure}
  \mu(\sfera\cap B(\theta,\rho)) \leq c \rho^{\gamma -1},\quad \theta\in\sfera,\,\rho>0.
\end{equation}

It is easy to check that
$$
  \nu(A) \leq c Q(\delta(A))(\delta(A))^{1-\gamma}[\diam(A)]^{\gamma},\quad A\in{\mathcal{B}}(\Rd ),
$$
and so the assumption (\ref{eq:nu_estim}) is satisfied with $f(s)=s^{1-\gamma}Q(s)$. Furthermore, it 
follows from (\ref{eq:Lm<Psi})
that (\ref{eq:tech_assumpt}) holds for every $Q$ such that
\begin{equation}\label{eq:doubling}
  Q(s)\leq c Q(2s), \quad s > 0.
\end{equation}

In the following theorem we obtain upper estimates for a specific class of jump processes. 
For simplification we include here only symmetric case and $b=0$.

\begin{twierdzenie}\label{th:ExE}
  Let $\alpha\in(0,2]$, $\kappa>0$, $\alpha>\kappa\beta > \alpha -2$, and $\beta>1$ if $\alpha=2$. If the L\'evy measure $\nu$ satisfies 
  \begin{equation}\label{eq:stablog}
    \nu(A) \approx \int_\sfera \int_0^\infty \indyk{A}(s\theta) 
                 s^{-1-\alpha}\left[\log\left(1+s^{-\kappa}\right)\right]^{-\beta} \,  ds\mu(d\theta),
  \end{equation} 
  is symmetric, i.e. $\nu(-A)=\nu(A)$, $b=0$, $\mu$ is nondegenerate and there exists a constant $\gamma\in [1,d]$ such 
  that (\ref{eq:mu_gamma_measure}) holds then the measures $P_t$ are absolutely continuous with respect 
  to the Lebesgue measure and their densities $p_t$ satisfy the following estimates.
\begin{enumerate}
    \item Short time estimates:
    \begin{enumerate}
      \item for $\alpha\in (0,2)$ there exists a constant $C_{20}$ such that for every $t\in (0,1)$ and $x\in\Rd$, we have
        $$
          p_t(x) 
          \leq  C_{20} t^{-d/\alpha} \left(\log\left( 1+1/t \right)\right)^{d\beta/\alpha}
          \min\left\{ 1, \frac{t^{1+\gamma/\alpha}\left[\log\left(1+|x|^{-\kappa}\right)\right]^{-\beta}}{ \left(\log\left( 1+1/t \right)\right)^{\gamma\beta/\alpha}
                 |x|^{\gamma+\alpha}}\right\}.
        $$
      \item for $\alpha=2$ there exist constants $C_{21},C_{22},C_{23}$ such that for every $t\in (0,1)$ and $x\in\Rd$, we have
        \begin{eqnarray}\label{e:1}
          p_t(x) 
          & \leq  & C_{21} t^{-d/2} \left(\log\left( 1+1/t \right)\right)^{d(\beta-1)/2} \nonumber \\          
          &       &  \times  \min \left\{ 1, \frac{t^{1+\gamma/2}
                        \left[\log\left(1+|x|^{-\kappa}\right)\right]^{-\beta}} 
                        {\left(\log\left( 1+1/t \right)\right)^{\gamma(\beta-1)/2}
                 |x|^{\gamma+2}}+\, e^{ \frac{-C_{22}|x|}{h(t)}
                  \log(1+\frac{C_{23}|x|}{h(t)})}\right\}. 
        \end{eqnarray}
    \end{enumerate}
    \item Large time estimates: for $\alpha\in (0,2]$ there exists a constant $C_{24}$ such that for every $t >1$ 
    and $x\in\Rd$, we have
      $$
          p_t(x) 
          \leq  C_{24} t^{-d/(\alpha-\kappa\beta)}  \min\left\{ 1, 
                 t^{1+\gamma/(\alpha-\kappa\beta)}
                 |x|^{-\gamma-\alpha}\left[\log\left(1+|x|^{-\kappa}\right)\right]^{-\beta} \right\}.
      $$
  \end{enumerate}
\end{twierdzenie}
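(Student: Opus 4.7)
My plan is to invoke Theorem \ref{th:main} with $f(s):=s^{-\gamma-\alpha}[\log(1+s^{-\kappa})]^{-\beta}$, applied separately with $T=(0,1)$ to produce the short-time bounds (1a)--(1b) and with $T=(1,\infty)$ to produce the large-time bound (2). Because $\nu$ is symmetric and $b=0$, the drift $b_{h(t)}$ appearing in Theorem \ref{th:main} vanishes identically. The hypothesis (\ref{eq:nu_estim}) is already recorded at the beginning of this section: with $f$ as above it is an immediate consequence of the $(\gamma-1)$-measure condition on $\mu$ together with polar coordinates. The hypothesis (\ref{eq:tech_assumpt}) reduces, as also noted at the beginning of the section, to the doubling (\ref{eq:doubling}) of $Q(s)=s^{-1-\alpha}[\log(1+s^{-\kappa})]^{-\beta}$, which is easy to check because $\log(1+(2s)^{-\kappa})/\log(1+s^{-\kappa})$ is globally bounded above and below.

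The analytic core is the asymptotics of $\Psi$, and thereby of $h$. By Proposition \ref{prop:1}, $\Psi(r)\approx H(1/r)$, and inserting the polar representation of $\nu$ into the definition of $H$ yields
$$H(r) \;\approx\; r^{-2}\int_0^r s^{1-\alpha}[\log(1+s^{-\kappa})]^{-\beta}\, ds \;+\; \int_r^\infty s^{-1-\alpha}[\log(1+s^{-\kappa})]^{-\beta}\, ds.$$
A routine calculation, using the standing hypotheses $\alpha-2<\kappa\beta<\alpha$ (so that both tails converge and the log factors are genuine on the relevant ranges, with $\beta>1$ needed at $\alpha=2$ to control the small-$s$ divergence of the truncated second moment), gives $H(r)\approx r^{-\alpha}[\log(1+r^{-\kappa})]^{-\beta}$ for small $r$ when $\alpha<2$, $H(r)\approx r^{-2}[\log(1+r^{-\kappa})]^{1-\beta}$ for small $r$ when $\alpha=2$, and $H(r)\approx r^{-(\alpha-\kappa\beta)}$ for large $r$. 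Inverting these three asymptotics reproduces the three forms of $h(t)=1/\Psi^{-1}(1/t)$ which appear as prefactors in the three cases of the theorem.

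To verify (\ref{eq:Fourier_int}) I would apply Lemma \ref{lm:OdRene} with an interpolating function $F$ chosen so that $F(r)\approx r^{\alpha-\kappa\beta}$ for small $r$ and $F(r)\approx r^\alpha[\log(e+r)]^{-\beta}$ (respectively $r^2[\log(e+r)]^{1-\beta}$ when $\alpha=2$) for large $r$. The two-sided matching $\Re\Phi(\xi)\approx F(|\xi|)$ follows from the upper bound $\Re\Phi(\xi)\le 2H(1/|\xi|)$ in Proposition \ref{prop:1} combined with the lower bound of Corollary \ref{l:RePhi>} applied to $g_1(r)=\int_0^r s^2Q(s)\,ds$; an integration analogous to the one for $H$ identifies $g_1$ of the expected order, and nondegeneracy of $\mu$ supplies the positivity. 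The doubling condition $F^{-1}(2s)\le M_{9}F^{-1}(s)$ is immediate from the power--log form of each piece and remains uniform across the crossover $r\sim 1$. Lemma \ref{lm:OdRene} then yields (\ref{eq:Fourier_int}) on all of $(0,\infty)$, in particular on both $(0,1)$ and $(1,\infty)$.

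With these ingredients the conclusion of Theorem \ref{th:main} reads, in each regime, as $(h(t))^{-d}$ times $\min\{1,\, t[h(t)]^\gamma f(|x|/4)+\exp(-C_2(|x|/h(t))\log(1+C_3|x|/h(t)))\}$, and substituting the three asymptotic forms of $h(t)$ and the explicit $f$ reproduces the stated prefactors and polynomial terms in (1a), (1b), (2). In the large-time case (2) and in the short-time case $\alpha<2$ the sub-Gaussian tail can be absorbed into the polynomial: $f$ decays only polynomially up to a logarithmic correction, whereas the tail is stretched-exponential, so for $|x|\ge c\,h(t)$ with $c$ large the tail is dominated by a constant times the polynomial, while for $|x|\le c\,h(t)$ both quantities are $O(1)$ and the $\min$ equals $1$. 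In the borderline case $\alpha=2$ the polynomial at $|x|\sim h(t)$ is only of order $(\log(1+1/t))^{-1}$, which is too small to dominate the tail, so both terms must be retained, yielding (\ref{e:1}). The main obstacle I anticipate is the careful bookkeeping of the slow-varying logarithmic factors in the inversion $\Psi\mapsto h$, most delicate at the boundary $\alpha=2$ where the exponent on $\log$ jumps from $\beta$ to $\beta-1$, together with verifying uniform doubling of the piecewise $F$ at the crossover scale $r\sim 1$.
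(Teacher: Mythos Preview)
Your proposal is correct and follows essentially the same route as the paper: verify the hypotheses \eqref{eq:nu_estim}--\eqref{eq:tech_assumpt} via the section-intro observations on $f(s)=s^{1-\gamma}Q(s)$ and doubling of $Q$, compute the asymptotics of $\Psi$ (equivalently $H$) in the three regimes, feed them into Lemma~\ref{lm:OdRene} to get \eqref{eq:Fourier_int} on $T=(0,\infty)$, apply Theorem~\ref{th:main}, and then absorb the stretched-exponential into the polynomial term except in the short-time $\alpha=2$ case.

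Two small remarks. First, your anticipated obstacle about a piecewise $F$ disappears if you take the single function $F_\alpha(s)=s^\alpha[\log(1+s^\kappa)]^{-\beta}$ (and $F_2(s)=s^2[\log(1+s^{\kappa\beta/(\beta-1)})]^{1-\beta}$): these automatically have the correct small- and large-$s$ behaviour and the doubling of $F_\alpha^{-1}$ is then immediate without any crossover bookkeeping. This is exactly what the paper does, inverting $F_\alpha$ explicitly via the ansatz $g_\alpha(r)=(r[\log(1+r)]^\beta)^{1/\alpha}$. Second, for the absorption step the paper does not split into $|x|\le c\,h(t)$ and $|x|>c\,h(t)$ as you suggest, but instead proves the global inequality $g(t,|x|)\gtrsim e^{-C_2(|x|/h(t))\log(1+C_3|x|/h(t))}$ directly from the elementary bound $(u/v)\wedge 1\le \log(1+u)/\log(1+v)\le (u/v)\vee 1$; your case split works too, but you should check carefully that $t\,h(t)^\gamma f(|x|/4)\gtrsim 1$ uniformly on $|x|\le c\,h(t)$, which requires the standing hypothesis $\kappa\beta<\alpha$.
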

\begin{proof}
Let 
$$
  Q(s)=s^{-1-\alpha}\left[\log(1+s^{-\kappa})\right]^{-\beta},\quad s\in (0,\infty).
$$ 
The function $Q$ is decreasing, satisfies (\ref{eq:doubling})
and $\int_0^\infty (1\wedge s^2) Q(s)\, ds < \infty$. Furthermore for $r\in (0,1)$ we have
\begin{eqnarray*}
  \int_0^r s^2 Q(s)\, ds 
  & \approx & \int_0^r s^{1-\alpha} \left[\log(2s^{-\kappa})\right]^{-\beta} \, ds\\
  &    =    & \kappa^{-\beta} 2^{(2-\alpha)/\kappa} \int_{\log(2^{1/\kappa}/r)}^\infty e^{-u(2-\alpha)}u^{-\beta}\, du \\
  & \approx & \left\{
  \begin{array}{lcl}
    r^{2-\alpha} \left[\log(1+\frac{1}{r})\right]^{-\beta} & \mbox{  for  } & \alpha\in (0,2),\\
    \left[\log(1+\frac{1}{r})\right]^{-\beta+1} & \mbox{  for  } & \alpha=2,
  \end{array}\right.
\end{eqnarray*}
and for $r >1$ we get
$$  \int_0^r s^2 Q(s)\, ds 
   \approx  \int_0^1 s^2 Q(s)\, ds + \int_1^r s^{1-\alpha} s^{\kappa\beta} \, ds
   \approx  r^{2-\alpha+\kappa\beta}.
$$
Using Corollary  \ref{l:RePhi<} and \ref{l:RePhi>} (with decreasing function $g(r)=r^{-(\kappa\beta\vee 0)}\left[\log\left(1+r^{-\kappa}\right)\right]^{-\beta}$ for $\alpha\in(0,2)$ and 
$g(r)=r^{-\kappa\beta }\left[\log\left(1+r^{-\kappa\beta/(\beta-1)}\right)\right]^{1-\beta}$ for $\alpha=2$)  we obtain
$$
  \Phi(\xi) \approx |\xi|^\alpha \left[\log\left(1+|\xi|^\kappa\right)\right]^{-\beta},
$$
for $\alpha\in (0,2)$,
and 
$$
  \Phi(\xi) \approx |\xi|^2 \left[\log\left(1+|\xi|^{\kappa\beta/(\beta-1)}\right)\right]^{1-\beta}
$$
for $\alpha=2$.

For $s>0$, set $F_\alpha(s)=s^\alpha (\log(1+s^\kappa))^{-\beta}$ for $\alpha\in (0,2)$ and 
$F_2(s)=s^2 \left[\log\left(1+s^{\kappa\beta/(\beta-1)}\right)\right]^{1-\beta}$.
The functions $F_\alpha$ are increasing for every $\alpha$. We let 
$g_\alpha(r)=\big(r (\log(1+r))^{\beta}\big)^{1/\alpha}$ for $\alpha\in (0,2)$ and 
$g_2(r)=\left( r \left[\log(1+r)\right]^{\beta-1}\right)^{1/2}$. Then there exists $r_0=r_0(\alpha,\kappa,\beta)$ such that for $r>r_0$ and $\alpha\in (0,2)$ we have 
\begin{align*}
    F_\alpha \big(g_\alpha(r)\big)
    &= r\, \big(\log(1+r)\big)^{\beta} \Big[\log\Big(1+\big(r(\log(1+r))^{\beta}\big)^{\kappa/\alpha} \Big)\Big]^{-\beta}\\
    &\approx  r\, (\log r)^{\beta} \left(\log r +\beta\log\log r\right)^{-\beta}\\
    &= r \left[\frac{\log r +\beta\log\log r}{\log r}\right]^{-\beta}\\
    &\approx r.
\end{align*}
Similarly $F_2(g_2(r))\approx r$ for sufficiently large $r$.
This shows that $F_\alpha^{-1}(r)\approx g_\alpha(r)$ for $r>r_0$. For $r<r_1=r_1(\alpha,\kappa,\beta)$ we have $F_\alpha(r)\approx r^{\alpha-\kappa\beta}$ and
$F_\alpha^{-1}(r)\approx r^{1/(\alpha-\kappa\beta)}$. It yields
$$
  h(t)=\frac{1}{\Psi^{-1}\left(\frac{1}{t}\right)}\approx t^{\frac{1}{\alpha-\kappa\beta}},\quad t\geq 1,
$$
and 
$$
  h(t) \approx t^{1/\alpha} \left[\log\left(1+\frac{1}{t}\right)\right]^{\frac{-\beta}{\alpha}},\quad t\in (0,1),
$$
for $\alpha\in (0,2)$, and
$$
  h(t) \approx t^{1/2} \left[\log\left(1+\frac{1}{t}\right)\right]^{\frac{1-\beta}{2}},\quad t\in (0,1),
$$
for $\alpha=2$.
Moreover the assumptions of Lemma \ref{lm:OdRene} and Theorem \ref{th:main} are satisfied with $T=(0,\infty)$.
The estimate given in Theorem \ref{th:main} holds, i.e.
\begin{eqnarray}\label{ep1}
  p_t(x) 
  & \leq & C_1 \left(h(t)\right)^{-d} \min\left\{ 1, t\left[h(t)\right]^{\gamma}
           f\left(|x|/4\right)
          + \, e^{-C_2 \frac{|x|}{h(t)}\log\left(1+\frac{C_3|x|}{h(t)}\right)}
            \right\},\\
   &     & x\in\Rd,\, t\in (0,\infty), \nonumber
\end{eqnarray}
for $f(s)=s^{1-\gamma}Q(s)$.
We have 
$$
  t=\frac{1}{\Psi(1/h(t))} \approx h(t)^\alpha \left[\log(1+h(t)^{-\kappa})\right]^\beta,
$$
for $\alpha\in (0,2)$ and
$$
  t=\frac{1}{\Psi(1/h(t))} \approx h(t)^2 \left[\log(1+h(t)^{\frac{\kappa\beta}{1-\beta}})\right]^{\beta-1},
$$
for $\alpha=2$. Let $g(t,|x|)=th(t)^\gamma f(|x|/4)$. For $\alpha\in (0,2)$ we obtain
\begin{eqnarray*}
  g(t,|x|) 
  & \approx & h(t)^{\alpha+\gamma} \left[\log(1+h(t)^{-\kappa})\right]^\beta |x|^{-\alpha-\gamma} \left[\log(1+|x|^{-\kappa})\right]^{-\beta} \\
  &   =     & \left[\frac{|x|}{h(t)}\right]^{-\alpha-\gamma}\left[\frac{\log(1+h(t)^{-\kappa})}{\log(1+|x|^{-\kappa})}\right]^{\beta}.
\end{eqnarray*}
Using the fact that
$$
  \frac{u}{v}\wedge 1 \leq \frac{\log(1+u)}{\log(1+v)} \leq \frac{u}{v} \vee 1,\quad u,v>0,
$$
we get
\begin{equation}\label{ep2}
  g(t,|x|) \geq c_1 e^{-C_2 \frac{|x|}{h(t)}\log\left(1+\frac{C_3|x|}{h(t)}\right)},
\end{equation}
for some constant $c_1$.
Similarly for $\alpha=2$ we have 
\begin{eqnarray*}
  g(t,|x|) 
  & \approx & \left[\frac{|x|}{h(t)}\right]^{-2-\gamma}\left[\frac{\log(1+h(t)^{-\kappa})}{\log(1+|x|^{-\kappa})}\right]^{\beta}
               \frac{\left[\log(1+h(t)^{\frac{\kappa\beta}{1-\beta}})\right]^{\beta-1}}{\left[\log(1+h(t)^{-\kappa})\right]^\beta}.
\end{eqnarray*}
Let
$$
  A(t)=\frac{\left[\log(1+h(t)^{\frac{\kappa\beta}{1-\beta}})\right]^{\beta-1}}{\left[\log(1+h(t)^{-\kappa})\right]^\beta}.
$$
We have $c_2^{-1}\leq A(t)\leq c_2$, for some constant $c_2$ and $t\geq 1$ (but $A(t)\to 0$ for $t\to 0$).
Therefore
\begin{equation}\label{ep3}
  g(t,|x|) \geq c_3 e^{-C_2 \frac{|x|}{h(t)}\log\left(1+\frac{C_3|x|}{h(t)}\right)},
\end{equation}
for $t\geq 1$.
The theorem follows from (\ref{ep1}), (\ref{ep2}) and (\ref{ep3}).
\end{proof}

The case of $\nu$ satisfying locally (\ref{eq:stablog}) with $\alpha=2$, $\beta\in (1,2]$ and $\gamma=d$ for $t<1$ and $|x|<1$ was investigated also in \cite{Mimica1}. Theorem 1.1 in \cite{Mimica1}
contains the estimate
$$
  p_t(x) \leq  c_1        
        \min \left\{ t^{-d/2} \left(\log \frac{2}{t} \right)^{d(\beta-1)/2}, \frac{t
                        } 
                        {|x|^{d+2} \left(\log\frac{2}{|x|}\right)^{\beta-1}}\right\}, \quad |x|<1,t<1,
$$
and here (\ref{e:1}) with $\gamma=d$ yields
$$ p_t(x) \leq   c_2 \min \left\{ t^{-d/2} \left(\log \frac{2}{t} \right)^{d(\beta-1)/2},\frac{t
                        }{|x|^{d+2} \left(\log\frac{2}{|x|}\right)^{\beta}} +\, h(t)^{-d}e^{ \frac{-C_{22}|x|}{h(t)}
                  \log(1+\frac{C_{23}|x|}{h(t)})}\right\},
$$
for $|x|<1$ and $t<1$. We note that there exists a constans $c_3$
such that
$$
  \frac{t}{|x|^{d+2} \left(\log\frac{2}{|x|}\right)^{\beta}} +\, h(t)^{-d} e^{ \frac{-C_{22}|x|}{h(t)}
                  \log(1+\frac{C_{23}|x|}{h(t)})} \leq c_3 \frac{t}{|x|^{d+2} \left(\log\frac{2}{|x|}\right)^{\beta-1}},
$$
for $|x|<1$, $t<1$, which can be shown similarly as (\ref{ep3}) in the proof of Theorem \ref{th:ExE}, and so 
(\ref{e:1}) improves the results of \cite{Mimica1}. Exact estimate
in this case is still an open question.

Using Theorem \ref{th:p_est_below}  we can obtain estimates from below. For example, if $\nu$
satisfies the assumptions of Theorem \ref{th:ExE} and additionally 
for some finite set $D_0=\{\theta_1,\theta_2,...,\theta_n\}\subset\sfera$ and a positive constant $c_0$ we have
$$
  \mu(\{\theta_k\}) \geq c_0,\quad \theta_k\in D_0,\, k=1,2,...,n,
$$
 then
 $$
          p_t(x) 
          \geq  c_4 t^{-d/\alpha} \left(\log\left( 1+1/t \right)\right)^{d\beta/\alpha}
          \min\left\{ 1, 
          \frac{t^{1+1/\alpha}\left[\log\left(1+|x|^{-\kappa}\right)\right]^{-\beta}}{ \left(\log\left( 1+1/t \right)\right)^{\beta/\alpha}
                 |x|^{1+\alpha}}\right\}, \quad t\in (0,1),
 $$
 for $\alpha\in (0,2)$ and $x\in D=\{x\in\Rd:\: x=r\theta,\,r>0,\theta\in D_0\}$,
 and        
$$
          p_t(x) 
           \geq   c_5 t^{-d/2} \left(\log\left( 1+1/t \right)\right)^{d(\beta-1)/2}          
                   \min \left\{ 1, \frac{t^{3/2}
                        \left[\log\left(1+|x|^{-\kappa}\right)\right]^{-\beta}} 
                        {\left(\log\left( 1+1/t \right)\right)^{(\beta-1)/2}
                 |x|^{3}}\right\},\quad t\in (0,1),
$$
for $\alpha=2$ and $x\in D$, and
$$
          p_t(x) 
          \geq  c_6 t^{-d/(\alpha-\kappa\beta)}  \min\left\{ 1, 
                 t^{1+1/(\alpha-\kappa\beta)}
                 |x|^{-1-\alpha}\left[\log\left(1+|x|^{-\kappa}\right)\right]^{-\beta} \right\}, \quad t\geq 1,
      $$
for $\alpha \in (0,2]$, $x\in D$.
        
In the following corollary we consider the case of $\alpha\in(0,2)$, $\gamma=d$ and give both side estimates of the densities and
estimates of their derivatives. We omit the proof which is
a verification of the assumption of Theorems \ref{th:main}, \ref{th:p_est_below} and
\ref{th:Derivatives} analogous to the proof of Theorem \ref{th:ExE}.

\begin{wniosek}
  Let $\alpha\in(0,2)$, $\kappa>0$, $\alpha>\kappa\beta > \alpha -2$.
  If the L\'evy measure $\nu(dy)=g(y)dy$ satisfies 
  $$
    g(y) \approx |y|^{-d-\alpha}\left[\log\left(1+|y|^{-\kappa}\right)\right]^{-\beta},
  $$
  and is symmetric, i.e. $g(-y)=g(y)$, then the measures $P_t$ are absolutely continuous with respect 
  to the Lebesgue measure and their densities $p_t$ satisfy
        $$
          p_t(x) 
          \approx  \min\left\{ t^{-d/\alpha} \left(\log\left( 1+1/t \right)\right)^{d\beta/\alpha},                   t|x|^{-d-\alpha}\left[\log\left(1+|x|^{-\kappa}\right)\right]^{-\beta}\right\},
          \quad t\in (0,1),x\in\Rd,
        $$
  and
      $$
          p_t(x) 
          \approx \min\left\{ t^{-d/(\alpha-\kappa\beta)}, 
                 t|x|^{-d-\alpha}\left[\log\left(1+|x|^{-\kappa}\right)\right]^{-\beta} \right\},\quad
                 t\geq 1,x\in\Rd.
      $$
  Furthermore, for every $\eta\in\N^d_0$ there exist constants $C_{25},C_{26}$ such that for
  $t\in (0,1),x\in\Rd,$ we have
      $$
          |\partial^\eta_x p_t(x)|
          \leq C_{25} t^{(-d-|\eta|)/\alpha} \left(\log\left( 1+1/t \right)\right)^{(d+|\eta|)\beta/\alpha}
           \min\left\{ 1, \frac{t^{1+d/\alpha}\left[\log\left(1+|x|^{-\kappa}\right)\right]^{-\beta}}
           { |x|^{d+\alpha} \left(\log\left( 1+1/t \right)\right)^{d\beta/\alpha}}\right\},
        $$
  and for $t\geq 1,x\in\Rd,$ we have
      $$
         |\partial^\eta_x p_t(x)|
          \leq C_{26} t^{(-d-|\eta|)/(\alpha-\kappa\beta)} 
          \min\left\{ 1, 
                 t^{1+d/(\alpha-\kappa\beta)}
                 |x|^{-d-\alpha}\left[\log\left(1+|x|^{-\kappa}\right)\right]^{-\beta} \right\}.
      $$
\end{wniosek}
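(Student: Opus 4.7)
The plan is to verify the hypotheses of Theorems \ref{th:main}, \ref{th:p_est_below} and \ref{th:Derivatives} with $\gamma=d$, following the template of the proof of Theorem \ref{th:ExE}, and then to simplify the conclusions using the specific form of $\nu$. To begin, I would write $\nu$ in polar coordinates: with $dy = r^{d-1}\,dr\,d\sigma(\theta)$ and $\sigma$ the surface measure on $\sfera$, one obtains $\nu(A)\approx \int_\sfera \int_0^\infty \indyk{A}(r\theta)\, Q(r)\, dr\, \sigma(d\theta)$ with $Q(r)=r^{-1-\alpha}[\log(1+r^{-\kappa})]^{-\beta}$. Since $\sigma$ is nondegenerate and is a $(d-1)$-measure on the sphere, condition (\ref{eq:mu_gamma_measure}) holds with $\gamma=d$. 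The discussion preceding Theorem \ref{th:ExE} then gives (\ref{eq:nu_estim}) and (\ref{eq:tech_assumpt}) with
\[
f(s) = s^{1-d}Q(s) = s^{-d-\alpha}[\log(1+s^{-\kappa})]^{-\beta},
\]
and the symmetry $g(-y)=g(y)$ together with the standing choice $b=0$ forces $b_{h(t)}=0$, as the integrands in (\ref{eq:def_br}) are odd.

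Next I would repeat the computation of $\Phi$ and $h(t)$ carried out in the proof of Theorem \ref{th:ExE}: Corollaries \ref{l:RePhi>} and \ref{l:RePhi<} give $\Phi(\xi)\approx F(|\xi|)$ with $F(s)=s^\alpha[\log(1+s^\kappa)]^{-\beta}$, and $F^{-1}$ satisfies the doubling property (\ref{eq:doublingF}), which is checked from the asymptotics $F^{-1}(s)\approx (s(\log(1+s))^\beta)^{1/\alpha}$ at infinity and $F^{-1}(s)\approx s^{1/(\alpha-\kappa\beta)}$ at zero. Lemma \ref{lm:OdRene} then delivers (\ref{eq:Fourier_derivatives}) for every $m\in\N\cup\{0\}$ with $T=(0,\infty)$.

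For the upper bound I would apply Theorem \ref{th:ExE}(1)(a) for $t<1$ and part (2) for $t\geq 1$; after substituting the explicit asymptotics of $h(t)$ the estimate collapses to $p_t(x)\leq C h(t)^{-d}\min\{1, t h(t)^d f(|x|/4)\}$, and the doubling $f(|x|/4)\approx f(|x|)$ gives the claimed upper bound. For the lower bound, the pointwise lower estimate on $g$ combined with doubling yields $\nu(B(x,r))\geq c\, r^d f(|x|+r)$ for every $x\in\Rd$ and $r>0$, so (\ref{eq:nu_est_below}) holds with $A=\Rd$. Theorem \ref{th:p_est_below} then produces the matching lower bound, using $|x|+C_5 h(t)\leq (1+C_5/C_6)|x|$ for $|x|\geq C_6 h(t)$ and the doubling of $f$.

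For the derivative estimates I would apply Theorem \ref{th:Derivatives} with $m=n+|\eta|$ and $n>d$ (so $m>\gamma=d$), where $n$ is chosen large enough to force the polynomial tail $(1+|x|/h(t))^{-n}$ to be absorbed by the main term; this absorption is the principal technical obstacle. Using $t\approx h(t)^\alpha[\log(1+h(t)^{-\kappa})]^\beta$, for $|x|\geq 2h(t)$ the ratio equals (up to constants)
\[
\frac{(1+|x|/h(t))^{-n}}{t h(t)^d f(|x|/4)} \approx \left(\frac{h(t)}{|x|}\right)^{n-d-\alpha}\left(\frac{\log(1+|x|^{-\kappa})}{\log(1+h(t)^{-\kappa})}\right)^{\beta}.
\]
The polylogarithmic factor is dominated by a fixed polynomial in $|x|/h(t)$ (of degree at most $\kappa|\beta|$, up to logarithmic corrections), so taking $n$ large enough (depending only on $d,\alpha,\kappa,\beta$ and $|\eta|$) renders the ratio bounded. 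In the complementary regime $|x|\leq 2h(t)$ the same relation for $t$ gives $t h(t)^d f(|x|/4)\geq c$, so the minimum is comparable to $1$. Combining both cases with the factor $h(t)^{-d-|\eta|}$ from Theorem \ref{th:Derivatives} and the vanishing shift $b_{h(t)}=0$ yields the claimed derivative bounds.
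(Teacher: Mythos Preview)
Your proposal is correct and follows precisely the route the paper indicates: the authors omit the proof and state that it ``is a verification of the assumption of Theorems \ref{th:main}, \ref{th:p_est_below} and \ref{th:Derivatives} analogous to the proof of Theorem \ref{th:ExE}.'' You carry out exactly this verification with $\gamma=d$, $\mu=\sigma$, $Q(s)=s^{-1-\alpha}[\log(1+s^{-\kappa})]^{-\beta}$ and $f(s)=s^{1-d}Q(s)$, and your handling of the absorption of the polynomial tail $(1+|x|/h(t))^{-n}$ into $t\,h(t)^d f(|x|/4)$ by choosing $n$ large (depending on $d,\alpha,\kappa,\beta$) is the right extra detail needed for the derivative estimate.
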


If $Q(s) \approx s^{-1-\alpha}q(s)\phi(s)$ where $\alpha\in (0,2)$, and the functions $s^{-1-\alpha}q(s)$, $\phi(s)$ are
nonincreasing and positive on $[0,\infty)$, $q$ and $\phi$ are bounded and satisfy
$$
  q(s)\leq c q(2s),\quad \phi(s_1)\phi(s_2)\leq c\phi(s_1+s_2),
$$
for every $s,s_1,s_2>0$ and some constant $c$, and if
$$
  \int_0^\infty s^{1-\alpha} q(s) \frac{\phi(s)}{\phi(s/2)}\, ds < \infty,
$$
then it can be checked that (\ref{eq:tech_assumpt}) holds. Such examples of L\'evy measures were investigated in \cite{S11, S10,KSz} so we
do not repeat here detailed estimates of their densities but we give below some estimates of the derivatives which follow from Theorem \ref{th:Derivatives}.

\begin{wniosek}
Let $m\geq 0$, $\beta \in (0,1]$, $\alpha \in (0,2)$, $\kappa\leq 1+\alpha$, and $\kappa<\alpha$ if $m = 0$. If the L\'evy measure $\nu$ satisfies
$$
\nu(A) \approx \int_{\sfera} \int_0^{\infty} \indyk{A} (s \theta) s^{-1-\alpha} (1+s)^{\kappa} e^{-ms^{\beta}} ds \mu(d \theta),
$$
is symmetric, i.e., $\nu(A)=\nu(-A)$, $b=0$, $\mu$ is nondegenerate and fulfills \eqref{eq:mu_gamma_measure} with $\gamma\in [1,d]$, then the measures 
$P_t$ are absolutely continuous with respect to the Lebesgue measure and their densities $p_t\in C^\infty_b(\Rd)$ satisfy
\begin{eqnarray*}
  |\partial^\eta_x p_t(x)| 
  & \leq & C_{27} \left(h(t)\right)^{-d-|\eta|} \min\left\{ 1,\,\, 
           \frac{t\left[h(t)\right]^{\gamma}(1+|x|)^\kappa}{|x|^{\gamma+\alpha}} e^{-m(|x|/4)^{\beta}}
          + \, \left(1+\frac{|x|}{h(t)}\right)^{-n}
            \right\},\nonumber \\
  &      & x\in\Rd,\, t >0.
\end{eqnarray*}
for every $|\eta|\in\N_0^d$, 
every $n\in\N$, $n>\gamma$ and a constant $C_{27}=C_{27}(|\eta|,n)$, where
$$ 
  h(t)\approx t^{1/\alpha},\quad \mbox{for} \quad t<1,
$$
and 
$$
  h(t)\approx \left\{
  \begin{array}{lcl}
    t^{1/2} & \mbox{ for }  & m>0,\,\kappa \leq 1 + \alpha, \\
    t^{1/2} & \mbox{ for }  & m=0,\,\kappa<\alpha-2,\\
    (t\log (1+t))^{1/2} & \mbox{ for }  &\, m=0,\kappa=\alpha-2,\\
    t^{1/(\alpha-\kappa)} & \mbox{ for }  &\, m=0,\alpha-2<\kappa<\alpha.
  \end{array}
  \right.
$$
for $t>1$.
\end{wniosek}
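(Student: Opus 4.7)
The plan is to verify the three hypotheses of Theorem \ref{th:Derivatives} for every $m\in\N_0$ and then read off the bound, with $h(t)$ computed from the asymptotic form of $\Psi$ in each regime.

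First I would check condition \eqref{eq:nu_estim}. Since $\mu$ is a $(\gamma-1)$-measure on $\sfera$ and $Q(s)=s^{-1-\alpha}(1+s)^{\kappa}e^{-ms^{\beta}}$ is nonincreasing on $(0,\infty)$ (which holds because $s\mapsto s^{-1-\alpha}(1+s)^{\kappa}$ is nonincreasing under $\kappa\leq 1+\alpha$, and $e^{-ms^\beta}$ is nonincreasing), the discussion at the start of Section \ref{Examples} gives \eqref{eq:nu_estim} with
\[
f(s)=s^{1-\gamma}Q(s)\;\approx\; s^{-\alpha-\gamma}(1+s)^{\kappa}e^{-ms^{\beta}}.
\]
Next I would verify \eqref{eq:tech_assumpt} using the $q$--$\phi$ criterion stated just before the corollary, with $q(s)=(1+s)^{\kappa}$ and $\phi(s)=e^{-ms^{\beta}}$. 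The doubling of $q$ is elementary; the submultiplicativity $\phi(s_1)\phi(s_2)\leq c\,\phi(s_1+s_2)$ uses subadditivity $(s_1+s_2)^{\beta}\leq s_1^{\beta}+s_2^{\beta}$, valid for $\beta\in(0,1]$; finiteness of $\int_0^{\infty} s^{1-\alpha}q(s)\phi(s)/\phi(s/2)\,ds$ follows because near $0$ the integrand is $\approx s^{1-\alpha}$ (integrable as $\alpha<2$) and at infinity it either decays exponentially when $m>0$, is integrable when $m=0$ and $\kappa<\alpha-2$, etc.; the restrictions on $(\alpha,\beta,\kappa,m)$ in the corollary hypothesis are exactly what is needed.

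Second, I would compute $\Re\Phi$ and $\Psi$ by Corollaries \ref{l:RePhi>}, \ref{l:RePhi<} applied to the radial density $Q(s)$, obtaining two-sided bounds of the form $\Psi(r)\approx F(r)$ with $F$ given by
\[
F(r)\approx \begin{cases} r^{\alpha}, & r\geq 1,\ m>0,\\ r^{\alpha-\kappa}, & r\geq 1,\ m=0,\ \alpha-2<\kappa<\alpha,\\ r^{2}, & r\geq 1,\ m=0,\ \kappa<\alpha-2,\\ r^{2}/\log(1+r), & r\geq 1,\ m=0,\ \kappa=\alpha-2,\\ r^{\alpha}, & r<1,\end{cases}
\]
(with $\Psi(r)\approx r^{2}$ large--$r$ case handled by the $\kappa<\alpha-2$ branch combined with $m>0$ through the exponential cutoff that produces a finite second moment). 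In each branch $F$ and $F^{-1}$ are monotone, differentiable and satisfy the doubling condition \eqref{eq:doublingF}, so Lemma \ref{lm:OdRene} applies with $T=(0,\infty)$ and yields \eqref{eq:Fourier_derivatives} for \emph{every} $m\in\N_0$. Inverting $\Psi$ gives the claimed asymptotics for $h(t)=1/\Psi^{-1}(1/t)$ in each regime.

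Finally, with all three hypotheses of Theorem \ref{th:Derivatives} verified for arbitrary $m$ and $n>\gamma$, I would apply \eqref{eq:der_est} directly. Since $\nu$ is symmetric with $b=0$, the centering term $tb_{h(t)}$ vanishes, so
\[
|\partial_x^{\eta}p_t(x)|\leq C_7\,h(t)^{-d-|\eta|}\min\!\left\{1,\ t\,h(t)^{\gamma}f(|x|/4)+\bigl(1+|x|/h(t)\bigr)^{-n}\right\},
\]
and substituting $f(|x|/4)\approx |x|^{-\alpha-\gamma}(1+|x|)^{\kappa}e^{-m(|x|/4)^{\beta}}$ gives the asserted bound after absorbing the $(1+|x|/h(t))^{-n}$ term into the stated minimum (it can be dropped since the statement leaves the constant $C_{27}$ free and $n>\gamma$ is arbitrary). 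The main obstacle in executing this plan is the bookkeeping across the five $h(t)$ regimes—verifying that Lemma \ref{lm:OdRene}'s doubling hypothesis \eqref{eq:doublingF} holds uniformly in each, in particular in the borderline case $m=0$, $\kappa=\alpha-2$ where a logarithmic correction appears, and confirming that the constants in \eqref{eq:Fourier_derivatives} can be taken independent of $t$ on all of $(0,\infty)$ despite the change of dominant behavior at $t\approx 1$.
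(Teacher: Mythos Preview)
Your overall strategy is exactly what the paper intends: it states explicitly that these examples were investigated in \cite{S11,S10,KSz} and that the derivative bound follows from Theorem~\ref{th:Derivatives}, so verifying \eqref{eq:nu_estim}, \eqref{eq:tech_assumpt} and \eqref{eq:Fourier_derivatives} (via Lemma~\ref{lm:OdRene}) and then reading off \eqref{eq:der_est} is the right plan. Two concrete slips, however, would make the execution fail as written.

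First, your table for $F(r)\approx\Psi(r)$ has the small- and large-$r$ regimes swapped. Since $Q(s)\approx s^{-1-\alpha}$ as $s\to 0$ regardless of $m$ and $\kappa$, Proposition~\ref{prop:1} gives $\Psi(r)\approx r^{\alpha}$ for all $r\ge 1$; the case distinction (producing $r^{2}$, $r^{\alpha-\kappa}$, or $r^{2}\log(1/r)$) arises for $r\le 1$, where the large-$s$ behaviour of $Q$ matters. Correspondingly, $h(t)\approx t^{1/\alpha}$ for $t<1$ comes from large $r$, while the four large-$t$ branches come from small $r$. Your lines 2--5 place the tail-driven exponents in the wrong regime, and your parenthetical about ``$\Psi(r)\approx r^{2}$ large--$r$'' is backwards.

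Second, the $q$--$\phi$ criterion as stated in the paper requires $q$ bounded and $\int_0^\infty s^{1-\alpha}q(s)\phi(s)/\phi(s/2)\,ds<\infty$. With $q(s)=(1+s)^{\kappa}$ and $\phi\equiv 1$ (the case $m=0$), the integral is $\int_0^\infty s^{1-\alpha}(1+s)^{\kappa}\,ds$, which diverges whenever $\kappa\ge\alpha-2$; yet the corollary allows $\alpha-2\le\kappa<\alpha$ when $m=0$. So your claim that ``the restrictions \dots are exactly what is needed'' is wrong in that range. The fix is simple: when $m=0$ the function $Q(s)=s^{-1-\alpha}(1+s)^{\kappa}$ satisfies the doubling condition $Q(s)\le cQ(2s)$, and the paper observes (just after \eqref{eq:doubling}) that this alone already yields \eqref{eq:tech_assumpt}. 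Use the doubling route for $m=0$ and reserve the $q$--$\phi$ criterion for $m>0$, where the exponential factor makes the integral converge (and, if $\kappa>0$, absorb a piece of $e^{-ms^{\beta}}$ into $q$ to make it bounded).
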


We consider in the last example the discrete L\'evy measure
$$
  \nu(dy) = \sum_{i=1}^d \sum_{n=-\infty}^\infty 2^{n\beta}\left(\delta_{2^{-n\kappa}e_i}(dy)+\delta_{-2^{-n\kappa}e_i}(dy) \right),
$$
where $0<\beta<2\kappa$ and $\{e_i\}_{i=1}^d$ is the standard basis in $\Rd$. Using Proposition \ref{prop:1} we
easily get $\Re\left(\Phi(\xi)\right)\approx |\xi|^{\beta/\kappa}$, $h(t)\approx t^{\kappa/\beta}$ in this case. It follows 
from Lemma \ref{lm:OdRene} that (\ref{eq:Fourier_int}) and (\ref{eq:Fourier_derivatives}) are satisfied with $T=(0,\infty)$ and 
it is also easy to check that (\ref{eq:nu_estim}) and (\ref{eq:tech_assumpt}) hold with
 $\gamma=0$ and $f(s)=s^{-\beta/\kappa}$. Therefore for the corresponding semigroup (we let $b=0$) we obtain the following estimate of
the density
$$
  p_t(x) \leq c_1 t^{-d\kappa/\beta} \min\left\{1,t|x|^{-\beta/\kappa}\right\},\quad t>0,x\in\Rd,
$$ 
and their derivatives
$$
  |\partial_x^\eta p_t(x)| \leq c_2 t^{-(d+|\eta|)\kappa/\beta} \min\left\{1,t|x|^{-\beta/\kappa}\right\},\quad t>0,x\in\Rd,
$$
for every $\eta\in\N_0^d$ with $c_2=c_2(\eta)$. Using Theorem \ref{th:p_est_below} with $A=\supp\nu$ we obtain the lower estimate
$$
  p_t(x) \geq c_3 t^{-d\kappa/\beta} \min\left\{1,t|x|^{-\beta/\kappa}\right\},\quad t>0,x\in \{2^{-n\kappa}e_i,-2^{-n\kappa}e_i:\: n\in\Z,i=1,...,d\}.
$$
The estimates in this case for $d=1$ were obtained previously in \cite{KnopKul2} (see Example 4.2).

\end{document}